\theoremstyle{plain} 
\newtheorem{theorem}{Theorem}[section]
\newtheorem{lemma}[theorem]{Lemma}
\newtheorem{proposition}[theorem]{Proposition}
\theoremstyle{definition} 
\newtheorem{definition}[theorem]{Definition}
\newtheorem{example}[theorem]{Example}
\date{}
\begin{document}

\title{\bf The UP-Isomorphism Theorems for UP-algebras\footnote{This work was financially supported by the University of Phayao.}}

\author{Aiyared Iampan\footnote{Corresponding author.} \\[.3cm] Department of Mathematics, School of Science \\ University of Phayao, Phayao 56000, Thailand \\ Email: \texttt{aiyared.ia@up.ac.th}}\maketitle

\noindent\hrulefill

\begin{abstract}
In this paper, we construct the fundamental theorem of UP-homomorphisms in UP-algebras.
We also give an application of the theorem to the first, second, third and fourth UP-isomorphism theorems in UP-algebras.
\end{abstract}

\begin{flushleft}
\textbf{Mathematics Subject Classification:} 03G25 \\
\textbf{Keywords:} UP-algebra, fundamental theorem of UP-homomorphisms, first, second, third and fourth UP-isomorphism theorems
\end{flushleft}

\noindent\hrulefill


\section{Introduction and Preliminaries}\numberwithin{equation}{section}


Among many algebraic structures, algebras of logic form important class of algebras.
Examples of these are BCK-algebras \cite{Imai1966}, BCI-algebras \cite{Iseki1966}, BCH-algebras \cite{Hu1983}, KU-algebras \cite{Prabpayak2009}, SU-algebras \cite{Keawrahun2011} and others.
They are strongly connected with logic. For example, BCI-algebras introduced by Is\'{e}ki \cite{Iseki1966} in 1966 have connections with BCI-logic being the BCI-system in combinatory logic which has application in the language of functional programming.
BCK and BCI-algebras are two classes of logical algebras.
They were introduced by Imai and Is\'{e}ki \cite{Imai1966,Iseki1966} in 1966 and have been extensively investigated by many researchers.
It is known that the class of BCK-algebras is a proper subclass of the class of BCI-algebras.


The isomorphism theorems play an important role in a general logical algebra, which were studied by several researches such as:
In 1998, Jun, Hong, Xin and Roh \cite{Jun1998} proved isomorphism theorems by using Chinese Remainder Theorem in BCI-algebras.
In 2001, Park, Shim and Roh \cite{Park2001} proved isomorphism theorems of IS-algebras.
In 2004, Hao and Li \cite{Hao2004} introduced the concept of ideals of an ideal in a BCI-algebra and some isomorphism theorems are obtained by using this concept. They obtained several isomorphism theorems of BG-algebras and related properties.
In 2006, Kim \cite{Kim2006} introduced the notion of KS-semigroups. He characterized ideals of a KS-semigroup and proved the
first isomorphism theorem for KS-semigroups.
In 2008, Kim and Kim \cite{Kim2008} introduced the notion of BG-algebras which is a generalization of B-algebras.
They obtained several isomorphism theorems of BG-algebras and related properties.
In 2009, Paradero-Vilela and Cawi \cite{Paradero2009} characterized KS-semigroup homomorphisms and proved the isomorphism theorems for KS-semigroups.
In 2011, Keawrahun and Leerawat \cite{Keawrahun2011} introduced the notion of SU-semigroups and proved the isomorphism theorems for SU-semigroups.
In 2012, Asawasamrit \cite{Asawasamrit2012} introduced the notion of KK-algebras and studied isomorphism theorems of KK-algebras.

Iampan \cite{Iampan2014} now introduced a new algebraic structure, called a UP-algebra and a concept of UP-ideals, congruences and UP-homomorphisms in UP-algebras, and defined a congruence relation on a UP-algebra and a quotient UP-algebra.
In this paper, we construct the fundamental theorem of UP-homomorphisms in UP-algebras.
We also give an application of the theorem to the first, second, third and fourth UP-isomorphism theorems in UP-algebras.

\medskip

Before we begin our study, we will introduce to the definition of a UP-algebra.


\begin{definition}\label{UP1}\cite{Iampan2014}
An algebra $A=(A,\cdot,0)$ of type $(2,0)$ is called a \textit{UP-algebra}, where $A$ is a nonempty set, $\cdot$ is a binary operation on $A$, and $0$ is a fixed element of $A$ (i.e., a nullary operation) if it satisfies the following axioms: for any $x,y,z\in A$,
\begin{description}
\item[(UP-1)] $(y\cdot z)\cdot((x\cdot y)\cdot(x\cdot z))=0$,
\item[(UP-2)] $0\cdot x=x$,
\item[(UP-3)] $x\cdot0=0$, and
\item[(UP-4)] $x\cdot y=y\cdot x=0$ implies $x=y$.
\end{description}
\end{definition}


\begin{example}\label{UP33}\cite{Iampan2014}
Let $X$ be a universal set. Define a binary operation $\cdot$ on the power set of $X$ by putting $A\cdot B=B\cap A^{\prime}=A^{\prime}\cap B=B-A$ for all $A,B\in \mathcal{P}(X)$.
Then $(\mathcal{P}(X),\cdot,\emptyset)$ is a UP-algebra and we shall call it the \textit{power UP-algebra of type 1}.
\end{example}


\begin{example}\label{DVT29}\cite{Iampan2014}
Let $X$ be a universal set. Define a binary operation $\ast$ on the power set of $X$ by putting $A\ast B=B\cup A^{\prime}=A^{\prime}\cup B$ for all $A,B\in \mathcal{P}(X)$.
Then $(\mathcal{P}(X),\ast,X)$ is a UP-algebra and we shall call it the \textit{power UP-algebra of type 2}.
\end{example}


\begin{example}\label{UP34}\cite{Iampan2014}
Let $A=\{0,a,b,c\}$ be a set with a binary operation $\cdot$ defined by the following Cayley table:
\begin{equation}\label{eq13}
\begin{array}{c|cccc}
  \cdot & 0 & a & b & c \\
  \hline
  0 & 0 & a & b & c \\
  a & 0 & 0 & 0 & 0 \\
  b & 0 & a & 0 & c \\
  c & 0 & a & b & 0
\end{array}
\end{equation}
Then $(A,\cdot,0)$ is a UP-algebra.
\end{example}


The following proposition is very important for the study of UP-algebras.

\begin{proposition}\label{UP2}\cite{Iampan2014}
In a UP-algebra $A$, the following properties hold: for any $x,y,z\in A$,
\begin{enumerate}[label=\textrm{(\arabic*)}]
\item\label{UP2_1} $x\cdot x=0$,
\item\label{UP2_13} $x\cdot y=0$ and $y\cdot z=0$ implies $x\cdot z=0$,
\item\label{UP2_6} $x\cdot y=0$ implies $(z\cdot x)\cdot (z\cdot y)=0$,
\item\label{UP2_7} $x\cdot y=0$ implies $(y\cdot z)\cdot (x\cdot z)=0$,
\item\label{UP2_14} $x\cdot(y\cdot x)=0$,
\item\label{UP2_16} $(y\cdot x)\cdot x=0$ if and only if $x=y\cdot x$, and
\item\label{UP2_15} $x\cdot(y\cdot y)=0$.
\end{enumerate}
\end{proposition}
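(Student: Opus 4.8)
My plan is to prove the seven properties in an order that respects their logical dependencies, deriving each from the four axioms and, where convenient, from earlier items in the list. The unifying idea is that almost every identity comes from a single well-chosen substitution into (UP-1), followed by simplification using the ``unit-like'' laws (UP-2) and (UP-3). It helps to keep (UP-1) at hand with its bound variables relabelled as $a,b,c$, so that it reads $(b\cdot c)\cdot((a\cdot b)\cdot(a\cdot c))=0$.

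First I would establish (1). Substituting $x=0$ into (UP-1) and applying (UP-2) (which turns $0\cdot y$ into $y$ and $0\cdot z$ into $z$) collapses the axiom to $(y\cdot z)\cdot(y\cdot z)=0$. Since every element $w$ can be written as $0\cdot w$ by (UP-2), this says $w\cdot w=0$ for all $w$, which is (1). Property (7) is then immediate: by (1) we have $y\cdot y=0$, so $x\cdot(y\cdot y)=x\cdot 0=0$ by (UP-3).

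Next I would dispatch (2), (3) and (4) together, since each is just (UP-1) read with the relevant hypothesis inserted. For (4), substitute $(a,b,c)=(x,y,z)$ to get $(y\cdot z)\cdot((x\cdot y)\cdot(x\cdot z))=0$; the hypothesis $x\cdot y=0$ together with (UP-2) simplifies $(x\cdot y)\cdot(x\cdot z)$ to $x\cdot z$, yielding (4) at once. For (3), instead take $(a,b,c)=(z,x,y)$, which produces $(x\cdot y)\cdot((z\cdot x)\cdot(z\cdot y))=0$; with $x\cdot y=0$ and (UP-2) this becomes $(z\cdot x)\cdot(z\cdot y)=0$. For the transitivity statement (2), reuse the instance $(a,b,c)=(x,y,z)$: the hypothesis $y\cdot z=0$ with (UP-2) reduces the axiom to $(x\cdot y)\cdot(x\cdot z)=0$, and then $x\cdot y=0$ reduces this to $x\cdot z=0$.

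For (5) the only slightly delicate point is spotting the right substitution. The plan is to set $(a,b,c)=(y,0,x)$ in the relabelled axiom; then $0\cdot x=x$ and $y\cdot 0=0$ by (UP-2) and (UP-3), and after a further application of (UP-2) the factor $(y\cdot 0)\cdot(y\cdot x)$ collapses to $y\cdot x$, leaving exactly $x\cdot(y\cdot x)=0$. Finally, (6) is the one place where the antisymmetry axiom (UP-4) is needed. The reverse implication is trivial from (1): if $x=y\cdot x$ then $(y\cdot x)\cdot x=x\cdot x=0$. For the forward implication, I would combine the hypothesis $(y\cdot x)\cdot x=0$ with the identity $x\cdot(y\cdot x)=0$ just proved in (5); applying (UP-4) to the pair $x$ and $y\cdot x$ then forces $x=y\cdot x$. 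I expect no genuine obstacle here, since the whole proposition reduces to a sequence of substitutions and cancellations; the only things requiring care are choosing the substitution in (5) and remembering to invoke the earlier parts (1) and (5) when proving (6) and (7).
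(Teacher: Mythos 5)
Your proof is correct in every part: each substitution into (UP-1) is valid, the reduction of (1) from $(y\cdot z)\cdot(y\cdot z)=0$ to the general case via $w=0\cdot w$ properly handles the one genuine subtlety, and the use of (5) together with (UP-4) for the forward direction of (6) is exactly right. Note that this paper only cites Proposition~1.5 from \cite{Iampan2014} without reproducing a proof, so there is nothing in-text to diverge from; your axiom-substitution derivation is the standard argument for these properties and matches the natural dependency order, including deriving (7) from (1) and (UP-3) and (2) as a corollary of the same instance of (UP-1) used for (4).
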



\begin{theorem}\label{UP25}\cite{Iampan2014}
An algebra $A=(A,\cdot,0)$ of type $(2,0)$ is a UP-algebra if and only if it satisfies the following conditions: for all $x,y,z\in A$,
\begin{enumerate}[label=\textrm{(\arabic*)}]
\item\label{UP25_1} \emph{(UP-1):} $(y\cdot z)\cdot((x\cdot y)\cdot(x\cdot z))=0$,
\item\label{UP25_2} $(y\cdot0)\cdot x=x$, and
\item\label{UP25_3} \emph{(UP-4):} $x\cdot y=y\cdot x=0$ implies $x=y$.
\end{enumerate}
\end{theorem}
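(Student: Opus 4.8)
The plan is to prove the two directions separately, with essentially all of the work concentrated in the ``if'' direction.

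For the ``only if'' direction, assume $A$ is a UP-algebra. Then (UP-1) and (UP-4) are literally two of the defining axioms, so conditions \ref{UP25_1} and \ref{UP25_3} hold verbatim. For condition \ref{UP25_2}, I would simply chain the two axioms that do \emph{not} appear in the new list: by (UP-3) we have $y\cdot 0=0$, and then by (UP-2) we get $(y\cdot 0)\cdot x=0\cdot x=x$. This is a one-line computation and poses no difficulty.

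For the ``if'' direction, assume $A$ satisfies \ref{UP25_1}, \ref{UP25_2}, \ref{UP25_3}. Here (UP-1) and (UP-4) are again given, so the entire task reduces to recovering (UP-2) and (UP-3) from condition \ref{UP25_2}, namely $(y\cdot 0)\cdot x=x$, together with (UP-1). The guiding observation is that condition \ref{UP25_2} says precisely that every element of the form $y\cdot 0$ is a \emph{left identity}: fixing any $y$ and writing $e:=y\cdot 0$, we have $e\cdot x=x$ for all $x$, and in particular $e\cdot e=e$. So if I can also force $e\cdot e=0$, then $e=0$, which is exactly (UP-3); and feeding $y\cdot 0=0$ back into condition \ref{UP25_2} instantly yields $0\cdot x=x$, which is (UP-2).

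The key step, and the only place real cleverness is needed, is to extract the idempotent-type identity $w\cdot w=0$ from (UP-1) using the left identity $e$. Substituting $x:=e$ into (UP-1) collapses $x\cdot y=e\cdot y=y$ and $x\cdot z=e\cdot z=z$, so (UP-1) becomes $(y\cdot z)\cdot(y\cdot z)=0$; then taking $y:=e$ turns $y\cdot z$ into $e\cdot z=z$, leaving $z\cdot z=0$ for every $z\in A$. Applying this with $z:=e$ gives $e\cdot e=0$, and combined with $e\cdot e=e$ from the previous paragraph we conclude $e=y\cdot 0=0$ for the arbitrary $y$, establishing (UP-3) and hence (UP-2) as above. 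I expect the main obstacle to be noticing that condition \ref{UP25_2} should be read as a left-identity statement, and that a single substitution $x=e$ in (UP-1) already produces $w\cdot w=0$; once those two moves are in hand the argument is purely mechanical and does not even require (UP-4).
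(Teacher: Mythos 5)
Your proposal is correct, but there is nothing in this paper to compare it against: Theorem \ref{UP25} is stated here without proof, quoted from \cite{Iampan2014} as part of the preliminaries, so any verdict must rest on the proposal's own merits. On those merits it checks out completely. The ``only if'' direction is the immediate chain: (UP-3) gives $y\cdot 0=0$, then (UP-2) gives $(y\cdot 0)\cdot x=0\cdot x=x$. In the ``if'' direction your two key moves are both valid: reading condition \ref{UP25_2} as saying that $e:=a\cdot 0$ is a left identity (for any fixed $a$), the substitution $x:=e$ in (UP-1) collapses it to $(y\cdot z)\cdot(y\cdot z)=0$ for all $y,z$, and then $y:=e$ yields $z\cdot z=0$ for every $z$; since also $e\cdot e=e$, transitivity of equality alone forces $e=a\cdot 0=0$, which is (UP-3) because $a$ was arbitrary, and feeding $y\cdot 0=0$ back into \ref{UP25_2} recovers (UP-2). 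Your remark that (UP-4) is never invoked is accurate, since $e=e\cdot e=0$ is a purely equational deduction. One cosmetic point: you reuse the letter $y$ both for the fixed element defining $e$ and as the bound variable of (UP-1); since the derived identity $(y\cdot z)\cdot(y\cdot z)=0$ is universally quantified this causes no error, but renaming the fixed element (as done above with $a$) would make the substitutions unambiguous.
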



\begin{definition}\label{UP8}\cite{Iampan2014}
Let $A$ be a UP-algebra. A nonempty subset $B$ of $A$ is called a \textit{UP-ideal} of $A$ if it satisfies the following properties:
\begin{enumerate}[label=\textrm{(\arabic*)}]
\item\label{UP8_1} the constant $0$ of $A$ is in $B$, and
\item\label{UP8_2} for any $x,y,z\in A,x\cdot(y\cdot z)\in B$ and $y\in B$ implies $x\cdot z\in B$.
\end{enumerate}
Clearly, $A$ and $\{0\}$ are UP-ideals of $A$.
\end{definition}

We can easily show the following example.

\begin{example}\label{UP36}\cite{Iampan2014}
Let $A=\{0,a,b,c,d\}$ be a set with a binary operation $\cdot$ defined by the following Cayley table:
\begin{equation}\label{eq14}
\begin{array}{c|ccccc}
  \cdot & 0 & a & b & c & d \\
  \hline
  0 & 0 & a & b & c & d \\
  a & 0 & 0 & b & c & d \\
  b & 0 & 0 & 0 & c & d \\
  c & 0 & 0 & b & 0 & d \\
  d & 0 & 0 & 0 & 0 & 0
\end{array}
\end{equation}
Then $(A,\cdot,0)$ is a UP-algebra and $\{0,a,b\}$ and $\{0,a,c\}$ are UP-ideals of $A$.
\end{example}


\begin{theorem}\label{UP9}\cite{Iampan2014}
Let $A$ be a UP-algebra and $B$ a UP-ideal of $A$.
Then the following statements hold: for any $x,a,b\in A$,
\begin{enumerate}[label=\textrm{(\arabic*)}]
\item\label{UP9_1} if $b\cdot x\in B$ and $b\in B$, then $x\in B$. Moreover, if $b\cdot X\subseteq B$ and $b\in B$, then $X\subseteq B$,
\item\label{UP9_2} if $b\in B$, then $x\cdot b\in B$. Moreover, if $b\in B$, then $X\cdot b\subseteq B$, and
\item\label{UP9_3} if $a,b\in B$, then $(b\cdot(a\cdot x))\cdot x\in B$.
\end{enumerate}
\end{theorem}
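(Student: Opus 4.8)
The engine behind all three parts is the defining closure property of a UP-ideal, namely condition \ref{UP8_2}: whenever $u\cdot(v\cdot w)\in B$ and $v\in B$, one has $u\cdot w\in B$. The whole game is to choose the substitution $(u,v,w)$ so that the hypothesis term $u\cdot(v\cdot w)$ collapses either to $0$ (which lies in $B$ by \ref{UP8_1}) or to a term already known to be in $B$, while the conclusion term $u\cdot w$ is exactly the element I want. The two ``moreover'' clauses are in each case just the corresponding singleton statement applied to every element of $X$, so I would dispatch them in one line.

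For part \ref{UP9_1}, I would take $u=0$, $v=b$, $w=x$. Then (UP-2) gives $u\cdot(v\cdot w)=0\cdot(b\cdot x)=b\cdot x\in B$ by hypothesis, and $v=b\in B$, so \ref{UP8_2} yields $0\cdot x=x\in B$, again reading off (UP-2). For part \ref{UP9_2}, I would take $u=x$, $v=b$, $w=b$. Here $u\cdot(v\cdot w)=x\cdot(b\cdot b)=0\in B$ directly by \ref{UP2_15} (equivalently via \ref{UP2_1} together with (UP-3)), and $v=b\in B$, so \ref{UP8_2} gives $u\cdot w=x\cdot b\in B$.

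Part \ref{UP9_3} is the one requiring genuine work, and I expect it to be the main obstacle: the target $(b\cdot(a\cdot x))\cdot x$ does not reduce to $0$, so a single application of \ref{UP8_2} cannot produce it. My plan is to peel it off in two stages, keeping $u=b\cdot(a\cdot x)$ fixed throughout. First, apply \ref{UP8_2} with $v=b$ and $w=a\cdot x$: the hypothesis term becomes $(b\cdot(a\cdot x))\cdot(b\cdot(a\cdot x))=0\in B$ by \ref{UP2_1}, and since $b\in B$ this yields $(b\cdot(a\cdot x))\cdot(a\cdot x)\in B$. Second, feed this back into \ref{UP8_2} with $v=a$ and $w=x$: the hypothesis term is now exactly $(b\cdot(a\cdot x))\cdot(a\cdot x)$, already shown to lie in $B$, and since $a\in B$ I conclude $(b\cdot(a\cdot x))\cdot x\in B$.

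The only genuine care needed is bookkeeping: one must keep the roles of $a$ and $b$ straight across the two applications and verify at each stage that the element playing the part of $v$ (first $b$, then $a$) is really the membership hypothesis being consumed. No deeper identity than $x\cdot x=0$ is needed, so once the substitutions are fixed the verification is purely mechanical.
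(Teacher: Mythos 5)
Your proof is correct: each substitution into the UP-ideal closure condition checks out, including the two-stage argument for part \ref{UP9_3}, where $(b\cdot(a\cdot x))\cdot(b\cdot(a\cdot x))=0\in B$ with $b\in B$ yields $(b\cdot(a\cdot x))\cdot(a\cdot x)\in B$, and then $a\in B$ yields the target. The paper itself cites this theorem from \cite{Iampan2014} without reproving it, and your argument is precisely the standard one there, so nothing further is needed.
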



\begin{theorem}\label{UP10}\cite{Iampan2014}
Let $A$ be a UP-algebra and $\{B_{i}\}_{i\in I}$ a family of UP-ideals of $A$.
Then $\bigcap_{i\in I}B_{i}$ is a UP-ideal of $A$.
\end{theorem}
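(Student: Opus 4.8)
The plan is to verify directly that $\bigcap_{i\in I}B_{i}$ satisfies the two defining properties of a UP-ideal from Definition \ref{UP8}. This is the standard pattern for showing that an intersection of substructures defined by universally quantified closure conditions is again such a substructure, so no serious obstacle is expected; the work is purely a matter of unwinding the definition of intersection and quoting the hypotheses.

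First I would dispose of property \ref{UP8_1} and, simultaneously, nonemptiness. Since each $B_{i}$ is a UP-ideal, we have $0\in B_{i}$ for every $i\in I$ by \ref{UP8_1}. Hence $0\in\bigcap_{i\in I}B_{i}$, which shows both that the intersection contains the constant $0$ and that it is nonempty (so it is a legitimate candidate for being a UP-ideal at all).

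Next I would establish the closure condition \ref{UP8_2}. Fix arbitrary $x,y,z\in A$ and suppose $x\cdot(y\cdot z)\in\bigcap_{i\in I}B_{i}$ and $y\in\bigcap_{i\in I}B_{i}$. By the definition of intersection, this means that for every index $i\in I$ we have $x\cdot(y\cdot z)\in B_{i}$ and $y\in B_{i}$. Applying property \ref{UP8_2} inside each individual UP-ideal $B_{i}$ then yields $x\cdot z\in B_{i}$ for every $i\in I$. Since this membership holds for all indices, we conclude $x\cdot z\in\bigcap_{i\in I}B_{i}$.

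Having verified both \ref{UP8_1} and \ref{UP8_2}, together with nonemptiness, the intersection $\bigcap_{i\in I}B_{i}$ is a UP-ideal of $A$, completing the proof. The only point requiring any care is the nonemptiness clause in the definition, which is precisely why I would record the common membership of $0$ explicitly at the outset rather than leaving it implicit; everything else is a direct transfer of the per-index axioms through the intersection.
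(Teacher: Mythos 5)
Your proof is correct: the paper states Theorem \ref{UP10} as a preliminary result cited from \cite{Iampan2014} without reproducing a proof, and your direct verification of both conditions of Definition \ref{UP8} under the intersection (including the explicit nonemptiness via $0\in\bigcap_{i\in I}B_{i}$) is exactly the standard argument one would expect there. No gaps.
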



From Theorem \ref{UP10}, the intersection of all UP-ideals of a UP-algebra $A$ containing a subset $X$ of $A$ is the UP-ideal of $A$ generated by $X$.
For $X=\{a\}$, let $I(a)$ denote the UP-ideal of $A$ generated by $\{a\}$.
We see that the UP-ideal of $A$ generated by $\emptyset$ and $\{0\}$ is $\{0\}$, and the UP-ideal of $A$ generated by $A$ is $A$.


\begin{definition}\label{UP11}\cite{Iampan2014}
Let $A=(A,\cdot,0)$ be a UP-algebra. A subset $S$ of $A$ is called a \textit{UP-subalgebra} of $A$ if the constant $0$ of $A$ is in $S$, and $(S,\cdot,0)$ itself forms a UP-algebra.
Clearly, $A$ and $\{0\}$ are UP-subalgebras of $A$.
\end{definition}

Applying Proposition \ref{UP2} \ref{UP2_1}, we can then easily prove the following Proposition.

\begin{proposition}\label{UP20}\cite{Iampan2014}
A nonempty subset $S$ of a UP-algebra $A=(A,\cdot,0)$ is a UP-subalgebra of $A$ if and only if $S$ is closed under the $\cdot$ multiplication on $A$.
\end{proposition}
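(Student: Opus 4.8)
The plan is to prove the two implications of the biconditional separately. The forward direction is essentially definitional, while the reverse direction carries the only genuine content and rests on the observation that the UP-axioms are inherited by any nonempty subset that is closed under the multiplication.

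For the forward direction, I would assume that $S$ is a UP-subalgebra of $A$. By Definition \ref{UP11}, the triple $(S,\cdot,0)$ itself forms a UP-algebra, so in particular $\cdot$ must be a well-defined binary operation on $S$; by Definition \ref{UP1} this means precisely that $x\cdot y\in S$ whenever $x,y\in S$. Hence $S$ is closed under the multiplication on $A$.

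For the reverse direction, I would assume that $S$ is a nonempty subset of $A$ closed under $\cdot$. The first step is to establish that $0\in S$: since $S$ is nonempty, I pick any $x\in S$; closure gives $x\cdot x\in S$, and Proposition \ref{UP2} \ref{UP2_1} supplies $x\cdot x=0$, so $0\in S$. The second step is to verify that $(S,\cdot,0)$ satisfies the axioms (UP-1)--(UP-4). Closure guarantees that $\cdot$ restricts to a genuine binary operation on $S$, and the fact just proved provides the required nullary constant $0\in S$. Because each of (UP-1)--(UP-4) is a universally quantified condition on elements of $A$ and $S\subseteq A$, every instance of these axioms with variables ranging over $S$ holds automatically by restriction. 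Thus $(S,\cdot,0)$ is a UP-algebra, and together with $0\in S$ this shows that $S$ is a UP-subalgebra by Definition \ref{UP11}.

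The only modest obstacle lies in the reverse direction, namely securing $0\in S$ from closure alone; this is exactly where the nonemptiness of $S$ and the identity $x\cdot x=0$ of Proposition \ref{UP2} \ref{UP2_1} are indispensable, as the hint preceding the statement anticipates. Once $0\in S$ is in hand, the verification of the axioms is purely a matter of inheritance, since restricting universally quantified equational conditions to a closed subset automatically preserves them.
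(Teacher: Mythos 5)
Your proof is correct and follows exactly the route the paper indicates: the paper gives no detailed proof but prefaces the statement with the hint that Proposition \ref{UP2} \ref{UP2_1} (i.e., $x\cdot x=0$) is the key, and you use precisely that identity together with nonemptiness and closure to obtain $0\in S$, the remaining axioms holding by restriction since they are universally quantified conditions. Nothing is missing, and the handling of (UP-4) as a universally quantified implication inherited by subsets is correct.
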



\begin{theorem}\label{UP26}\cite{Iampan2014}
Let $A$ be a UP-algebra and $\{B_{i}\}_{i\in I}$ a family of UP-subalgebras of $A$.
Then $\bigcap_{i\in I}B_{i}$ is a UP-subalgebra of $A$.
\end{theorem}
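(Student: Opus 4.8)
The plan is to reduce everything to the closure criterion supplied by Proposition \ref{UP20}, which asserts that a nonempty subset of a UP-algebra is a UP-subalgebra precisely when it is closed under the operation $\cdot$. Consequently, once I have shown that the intersection $\bigcap_{i\in I}B_{i}$ is nonempty, the only remaining task is to verify that it is closed under $\cdot$.

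First I would dispose of nonemptiness. Since each $B_{i}$ is a UP-subalgebra, Definition \ref{UP11} guarantees that the constant $0$ of $A$ belongs to every $B_{i}$; hence $0\in\bigcap_{i\in I}B_{i}$, and in particular the intersection is nonempty, so Proposition \ref{UP20} is applicable to it.

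Next I would check closure. Take arbitrary elements $x,y\in\bigcap_{i\in I}B_{i}$, so that $x,y\in B_{i}$ for every $i\in I$. Because each $B_{i}$ is a UP-subalgebra, Proposition \ref{UP20} tells us it is closed under $\cdot$, whence $x\cdot y\in B_{i}$ for every $i\in I$; therefore $x\cdot y\in\bigcap_{i\in I}B_{i}$. This establishes that the intersection is closed under $\cdot$, and combining this with nonemptiness, Proposition \ref{UP20} immediately yields that $\bigcap_{i\in I}B_{i}$ is a UP-subalgebra of $A$.

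I do not anticipate any genuine obstacle: the entire argument is a direct transfer of the closure property through the intersection, mirroring the proof of Theorem \ref{UP10} for UP-ideals. The only point deserving mild care is to confirm nonemptiness before invoking Proposition \ref{UP20}, and this is handled at once by the common element $0$.
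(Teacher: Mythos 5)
Your proof is correct. The paper itself states Theorem \ref{UP26} without proof, quoting it from \cite{Iampan2014}, but your argument --- nonemptiness via the common constant $0$ guaranteed by Definition \ref{UP11}, followed by closure under $\cdot$ transferred through the intersection and the criterion of Proposition \ref{UP20} applied in both directions --- is precisely the standard intended route, parallel to the proof of Theorem \ref{UP10} for UP-ideals, and it handles the one delicate point (checking nonemptiness before invoking Proposition \ref{UP20}) correctly.
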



\begin{theorem}\label{UP12}\cite{Iampan2014}
Let $A$ be a UP-algebra and $B$ a UP-ideal of $A$.
Then $A\cdot B\subseteq B$. In particular, $B$ is a UP-subalgebra of $A$.
\end{theorem}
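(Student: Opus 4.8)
The plan is to prove the two claims in turn, noting that the inclusion $A\cdot B\subseteq B$ is in essence already recorded in Theorem \ref{UP9}, after which the UP-subalgebra conclusion follows formally from Proposition \ref{UP20}.

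First I would establish $A\cdot B\subseteq B$. An arbitrary element of $A\cdot B$ has the form $x\cdot b$ with $x\in A$ and $b\in B$. By Theorem \ref{UP9} \ref{UP9_2}, whenever $b\in B$ one has $x\cdot b\in B$ for every $x\in A$. Hence every element of $A\cdot B$ lies in $B$, giving $A\cdot B\subseteq B$ at once.

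Next I would deduce that $B$ is a UP-subalgebra. By Proposition \ref{UP20} it suffices to verify that $B$ is nonempty and closed under the operation $\cdot$. Nonemptiness is immediate, since $0\in B$ by part \ref{UP8_1} of Definition \ref{UP8}. For closure, take any $a,b\in B$; as $B\subseteq A$ we have $a\in A$ and $b\in B$, so the inclusion just proved yields $a\cdot b\in B$. Thus $B\cdot B\subseteq A\cdot B\subseteq B$, so $B$ is closed under $\cdot$, and Proposition \ref{UP20} then delivers that $B$ is a UP-subalgebra.

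Because both parts collapse to results proved earlier, I do not expect any genuine obstacle here. The only point needing attention is to recognise that Theorem \ref{UP9} \ref{UP9_2}, quantified over all $x\in A$, already supplies the entire inclusion $A\cdot B\subseteq B$; once this is in hand, closure of $B$ and hence the subalgebra property are purely formal consequences via Proposition \ref{UP20}.
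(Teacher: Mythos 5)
Your proof is correct: Theorem \ref{UP9} \ref{UP9_2} (which is itself the one-line application of the UP-ideal axiom with $x\cdot(b\cdot b)=x\cdot 0=0\in B$) gives $A\cdot B\subseteq B$ immediately, and then $B\cdot B\subseteq A\cdot B\subseteq B$ together with $0\in B$ yields the UP-subalgebra claim via Proposition \ref{UP20}. The paper states this theorem without proof, citing \cite{Iampan2014}, and your argument is essentially the standard one given there, so there is nothing to flag.
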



\begin{theorem}\label{UP30}\cite{Iampan2014}
Let $A$ be a UP-algebra and $B$ a UP-subalgebra of $A$.
If $S$ is a subset of $B$ that is satisfies the following properties:
\begin{enumerate}[label=\textrm{(\arabic*)}]
\item\label{UP30_1} the constant $0$ of $A$ is in $S$, and
\item\label{UP30_2} for any $x,a,b\in B$, if $a,b\in S$, then $(b\cdot(a\cdot x))\cdot x\in S$.
\end{enumerate}
Then $S$ is a UP-ideal of $B$.
\end{theorem}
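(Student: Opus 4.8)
The plan is to verify directly that $S$ satisfies the two defining conditions of a UP-ideal of the UP-algebra $B$ (Definition \ref{UP8} read with $A$ replaced by $B$). The first condition, that $0\in S$, is handed to us outright by hypothesis \ref{UP30_1}, so the entire content of the theorem lies in the second condition: for all $x,y,z\in B$ with $x\cdot(y\cdot z)\in S$ and $y\in S$, I must produce $x\cdot z\in S$. I would begin by recording that $x\cdot z\in B$, since $B$ is a UP-subalgebra and hence closed under $\cdot$ by Proposition \ref{UP20}; this is what licenses feeding $x\cdot z$ into hypothesis \ref{UP30_2} as its free variable.

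The key move is to apply hypothesis \ref{UP30_2} to the two elements $b:=x\cdot(y\cdot z)$ and $a:=y$, both of which lie in $S$ by assumption, taking the free variable to be $x\cdot z\in B$. This yields
\[ \big((x\cdot(y\cdot z))\cdot(y\cdot(x\cdot z))\big)\cdot(x\cdot z)\in S. \]
I would then reduce the whole theorem to the single UP-algebra identity $(x\cdot(y\cdot z))\cdot(y\cdot(x\cdot z))=0$: once this is in hand, the displayed element collapses by (UP-2) to $0\cdot(x\cdot z)=x\cdot z$, so $x\cdot z\in S$, which is exactly the second UP-ideal condition. Thus everything hinges on that identity, and it is worth noting that only this one direction is needed, not the full exchange equality $x\cdot(y\cdot z)=y\cdot(x\cdot z)$.

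The hard part will be establishing $(x\cdot(y\cdot z))\cdot(y\cdot(x\cdot z))=0$ from the axioms and Proposition \ref{UP2} alone. My approach would be to work in the induced order $p\preceq q\iff p\cdot q=0$, which is reflexive by \ref{UP2_1}, transitive by \ref{UP2_13}, and carries the monotonicity laws \ref{UP2_6} and \ref{UP2_7}, and then to build a transitive chain from $x\cdot(y\cdot z)$ up to $y\cdot(x\cdot z)$. Applying (UP-1) in the form $(y\cdot z)\cdot((x\cdot y)\cdot(x\cdot z))=0$ and prefixing by $x$ via \ref{UP2_6} gives one link, $x\cdot(y\cdot z)\preceq x\cdot((x\cdot y)\cdot(x\cdot z))$; combining \ref{UP2_14} (which yields $z\preceq x\cdot z$) with \ref{UP2_6} gives the companion bound $y\cdot z\preceq y\cdot(x\cdot z)$ on the target side. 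For the intermediate term I would use the instance of (UP-1) with data $(a,b,c)=(y,x,y\cdot z)$, giving $x\cdot(y\cdot z)\preceq(y\cdot x)\cdot(y\cdot(y\cdot z))$, and then try to show this term is itself $\preceq y\cdot(x\cdot z)$, so that \ref{UP2_13} closes the chain. Pinning down that final comparison is the crux, and I expect it to be the main obstacle; if the direct chain proves too delicate, the fallback is to isolate the needed inequality as a short preliminary lemma, proved once from (UP-1) and \ref{UP2_6}--\ref{UP2_14}, and then invoke it at this step.
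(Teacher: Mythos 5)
Your set-up is sound: $0\in S$ is hypothesis \ref{UP30_1}, the closure $x\cdot z\in B$ comes from Proposition \ref{UP20}, and instantiating hypothesis \ref{UP30_2} at $b=x\cdot(y\cdot z)$, $a=y$, free variable $x\cdot z$ does place $\bigl((x\cdot(y\cdot z))\cdot(y\cdot(x\cdot z))\bigr)\cdot(x\cdot z)$ in $S$. (Note that the paper states Theorem \ref{UP30} without proof, importing it from \cite{Iampan2014}, so I am judging your argument on correctness.) The fatal flaw is the lemma on which you hang everything: the identity $(x\cdot(y\cdot z))\cdot(y\cdot(x\cdot z))=0$ is \emph{false} in general UP-algebras, so your element does not collapse to $x\cdot z$, and neither your transitive chain nor your ``fallback preliminary lemma'' can ever be completed. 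Counterexample: on $A=\{0,a,b,c\}$ define
\begin{equation*}
\begin{array}{c|cccc}
  \cdot & 0 & a & b & c \\
  \hline
  0 & 0 & a & b & c \\
  a & 0 & 0 & b & c \\
  b & 0 & 0 & 0 & c \\
  c & 0 & a & a & 0
\end{array}
\end{equation*}
This satisfies (UP-1)--(UP-4): the only vanishing product of distinct nonzero elements is $b\cdot a=0$ (so (UP-4) is immediate), the cases of (UP-1) with a repeated argument or an argument $0$ reduce to $u\cdot u=0$, $u\cdot 0=0$ and $z\cdot(x\cdot z)=0$, all of which hold, and the six instances with $\{x,y,z\}=\{a,b,c\}$ distinct are checked directly (e.g.\ for $(x,y,z)=(a,c,b)$ one gets $(c\cdot b)\cdot\bigl((a\cdot c)\cdot(a\cdot b)\bigr)=a\cdot(c\cdot b)=a\cdot a=0$). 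Yet for $x=a$, $y=c$, $z=b$: $x\cdot(y\cdot z)=a\cdot(c\cdot b)=a\cdot a=0$ while $y\cdot(x\cdot z)=c\cdot(a\cdot b)=c\cdot b=a$, so $(x\cdot(y\cdot z))\cdot(y\cdot(x\cdot z))=0\cdot a=a\neq 0$. The same triple kills the last link you hoped to pin down: $(y\cdot x)\cdot(y\cdot(y\cdot z))=(c\cdot a)\cdot(c\cdot(c\cdot b))=a\cdot a=0$, whereas $y\cdot(x\cdot z)=a$, and $0\cdot a=a\neq 0$.

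Two further remarks on why this cannot be patched within your strategy. First, your aside that ``only one direction is needed, not the full exchange equality'' conceals how strong the claim is: if $x\cdot(y\cdot z)\preceq y\cdot(x\cdot z)$ held for all triples, swapping $x$ and $y$ and invoking (UP-4) would force the full exchange law $x\cdot(y\cdot z)=y\cdot(x\cdot z)$ in every UP-algebra, which the algebra above refutes; so the one-sided version is exactly as strong as exchange, and both fail. Second, the membership hypotheses do not restrict the triples enough to save you: taking $S=B=A$, your step must apply to arbitrary $x,y,z$, which is precisely the generality in which the identity is false. So this is not an unproven-but-true lemma left as a gap; it is a wrong reduction. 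A correct proof has to exploit hypothesis \ref{UP30_2} in a more structural way --- compare how Theorem \ref{UP9}\ref{UP9_3} is obtained by two successive applications of the UP-ideal property rather than by any exchange-type identity --- or follow the argument in \cite{Iampan2014}; the single-instantiation-plus-collapse shortcut is unavailable.
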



\begin{definition}\label{UP13}\cite{Iampan2014}
Let $A$ be a UP-algebra and $B$ a UP-ideal of $A$.
Define the binary relation $\sim_{B}$ on $A$ as follows: for all $x,y\in A$,
\begin{equation}\label{eq9}
x\sim_{B} y \textrm{ if and only if } x\cdot y\in B \textrm{ and } y\cdot x\in B.
\end{equation}
\end{definition}


\begin{definition}\label{UP15}\cite{Iampan2014}
Let $A$ be a UP-algebra. An equivalence relation $\rho$ on $A$ is called a \textit{congruence} if for any $x,y,z\in A$,
\begin{equation*}
x\rho y \textrm{ implies } x\cdot z\rho y\cdot z \textrm{ and } z\cdot x\rho z\cdot y.
\end{equation*}
\end{definition}


\begin{proposition}\label{UP14}\cite{Iampan2014}
Let $A$ be a UP-algebra and $B$ a UP-ideal of $A$ with a binary relation $\sim_{B}$ defined by \eqref{eq9}.
Then $\sim_{B}$ is a congruence on $A$.
\end{proposition}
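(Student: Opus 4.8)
The plan is to verify the two defining requirements of Definition \ref{UP15} separately: first that $\sim_{B}$ is an equivalence relation, and then that it is compatible with $\cdot$ on both sides. The engine for the whole argument will be the single identity supplied by axiom (UP-1), namely $(b\cdot c)\cdot((a\cdot b)\cdot(a\cdot c))=0$, used together with the absorption property \ref{UP8_2} of a UP-ideal and its consequence Theorem \ref{UP9} \ref{UP9_1}. Reflexivity is immediate: by Proposition \ref{UP2} \ref{UP2_1} we have $x\cdot x=0$, and $0\in B$ by \ref{UP8_1}, so $x\sim_{B}x$. Symmetry is built into \eqref{eq9}, since the condition ``$x\cdot y\in B$ and $y\cdot x\in B$'' is unchanged under swapping $x$ and $y$.

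Next I would isolate three membership facts, each extracted from (UP-1). (Left compatibility.) If $a\cdot b\in B$, then substituting $x\mapsto c,\ y\mapsto a,\ z\mapsto b$ into (UP-1) gives $(a\cdot b)\cdot((c\cdot a)\cdot(c\cdot b))=0\in B$; since $a\cdot b\in B$, Theorem \ref{UP9} \ref{UP9_1} yields $(c\cdot a)\cdot(c\cdot b)\in B$. (Right compatibility.) If $a\cdot b\in B$, then (UP-1) with $x\mapsto a,\ y\mapsto b,\ z\mapsto c$ gives $(b\cdot c)\cdot((a\cdot b)\cdot(a\cdot c))=0\in B$; reading this as an element of the form $s\cdot(t\cdot r)$ with $s=b\cdot c$, $t=a\cdot b\in B$ and $r=a\cdot c$, the ideal axiom \ref{UP8_2} forces $s\cdot r=(b\cdot c)\cdot(a\cdot c)\in B$. (Transitivity of the arrow.) If $a\cdot b\in B$ and $b\cdot c\in B$, apply right compatibility to obtain $(b\cdot c)\cdot(a\cdot c)\in B$, and then Theorem \ref{UP9} \ref{UP9_1} with $b\cdot c\in B$ gives $a\cdot c\in B$.

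With these in hand the conclusion is assembled quickly. For transitivity of $\sim_{B}$, suppose $x\sim_{B}y$ and $y\sim_{B}z$; applying the transitivity fact to $x\cdot y,\,y\cdot z\in B$ gives $x\cdot z\in B$, and to $z\cdot y,\,y\cdot x\in B$ gives $z\cdot x\in B$, so $x\sim_{B}z$, completing the proof that $\sim_{B}$ is an equivalence relation. For the congruence property, suppose $x\sim_{B}y$, so that $x\cdot y\in B$ and $y\cdot x\in B$. Left compatibility applied to each of these gives $(z\cdot x)\cdot(z\cdot y)\in B$ and $(z\cdot y)\cdot(z\cdot x)\in B$, that is, $z\cdot x\sim_{B}z\cdot y$; right compatibility applied to each gives $(y\cdot z)\cdot(x\cdot z)\in B$ and $(x\cdot z)\cdot(y\cdot z)\in B$, that is, $x\cdot z\sim_{B}y\cdot z$.

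I expect the only genuinely delicate point to be the right compatibility step. There is no axiom asserting a right-hand analogue of (UP-1): the term $(b\cdot c)\cdot(a\cdot c)$ has a common right factor $c$, so it is not itself an instance of (UP-1), and one cannot invoke Proposition \ref{UP2} \ref{UP2_7} directly because there the hypothesis is $a\cdot b=0$ rather than the weaker $a\cdot b\in B$. The key move is to notice that the zero guaranteed by (UP-1) already has exactly the shape $s\cdot(t\cdot r)$ demanded by the UP-ideal absorption axiom \ref{UP8_2}, with the middle factor $t=a\cdot b$ lying in $B$; feeding this into \ref{UP8_2} converts the equality to $0$ into the desired membership. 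Once this observation is made, the remaining verifications are routine.
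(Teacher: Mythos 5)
Your proof is correct, and since this proposition is quoted in the paper from \cite{Iampan2014} without a reproduced proof, the relevant comparison is with the standard argument of that source, which yours matches: reflexivity from $x\cdot x=0\in B$, symmetry from the symmetric form of \eqref{eq9}, and the two compatibility facts plus transitivity all extracted from (UP-1) together with the UP-ideal absorption axiom \ref{UP8_2} and its consequence Theorem \ref{UP9} \ref{UP9_1}. In particular, your handling of the delicate right-compatibility step --- reading the (UP-1) instance $(b\cdot c)\cdot((a\cdot b)\cdot(a\cdot c))=0\in B$ as $x\cdot(y\cdot z)\in B$ with $y=a\cdot b\in B$ and applying \ref{UP8_2} directly --- is exactly the intended mechanism, and all substitutions check out.
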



Let $A$ be a UP-algebra and $\rho$ a congruence on $A$.
If $x\in A$, then the $\rho$-class of $x$ is the $(x)_{\rho}$ defined  as follows:
\begin{equation*}
(x)_{\rho}=\{y\in A\mid y\rho x\}.
\end{equation*}
Then the set of all $\rho$-classes is called the \textit{quotient set of $A$ by $\rho$}, and is denoted by $A/\rho$.
That is,
\begin{equation*}
A/\rho=\{(x)_{\rho}\mid x\in A\}.
\end{equation*}


\begin{theorem}\label{UP17}\cite{Iampan2014}
Let $A$ be a UP-algebra and $\rho$ a congruence on $A$.
Then the following statements hold:
\begin{enumerate}[label=\textrm{(\arabic*)}]
\item\label{UP17_1} the $\rho$-class $(0)_{\rho}$ is a UP-ideal and a UP-subalgebra of $A$,
\item\label{UP17_3} a $\rho$-class $(x)_{\rho}$ is a UP-ideal of $A$ if and only if $x\rho0$, and
\item\label{UP17_4} a $\rho$-class $(x)_{\rho}$ is a UP-subalgebra of $A$ if and only if $x\rho0$.
\end{enumerate}
\end{theorem}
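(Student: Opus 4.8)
The plan is to prove statement \ref{UP17_1} first and then deduce \ref{UP17_3} and \ref{UP17_4} from it almost immediately. The point is that if $x\rho0$, then $x$ and $0$ lie in the same $\rho$-class, so $(x)_{\rho}=(0)_{\rho}$; conversely, any UP-ideal and any UP-subalgebra is required to contain the constant $0$ (Definitions \ref{UP8} and \ref{UP11}), so if $(x)_{\rho}$ is one of these, then $0\in(x)_{\rho}$, which says exactly $0\rho x$, i.e., $x\rho0$. Thus both \ref{UP17_3} and \ref{UP17_4} will follow by combining these two observations with the single fact that $(0)_{\rho}$ is itself a UP-ideal and a UP-subalgebra. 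All the real work therefore lies in \ref{UP17_1}.

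For the UP-ideal part of \ref{UP17_1}, I would first note that $0\rho0$ by reflexivity, so $0\in(0)_{\rho}$ and the first axiom of Definition \ref{UP8} holds. For the second axiom, take arbitrary $x,y,z\in A$ with $x\cdot(y\cdot z)\in(0)_{\rho}$ and $y\in(0)_{\rho}$; that is, $x\cdot(y\cdot z)\rho0$ and $y\rho0$. The key computation is to transport $y\rho0$ through the operation: applying the congruence property (Definition \ref{UP15}) on the right by $z$ gives $y\cdot z\rho0\cdot z$, and by (UP-2) the right-hand side equals $z$, so $y\cdot z\rho z$. Applying the congruence property on the left by $x$ then yields $x\cdot(y\cdot z)\rho x\cdot z$. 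Combining this with $x\cdot(y\cdot z)\rho0$ through symmetry and transitivity of $\rho$ gives $x\cdot z\rho0$, i.e., $x\cdot z\in(0)_{\rho}$, as required.

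For the UP-subalgebra part of \ref{UP17_1}, the cleanest route is to invoke Theorem \ref{UP12}: having shown $(0)_{\rho}$ is a UP-ideal, it is automatically a UP-subalgebra. Alternatively, one checks closure under $\cdot$ directly via Proposition \ref{UP20}: if $a,b\in(0)_{\rho}$, then $a\rho0$ gives $a\cdot b\rho0\cdot b=b$ by (UP-2), and since $b\rho0$, transitivity yields $a\cdot b\rho0$, so $a\cdot b\in(0)_{\rho}$.

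I expect the main obstacle to be purely the bookkeeping of the congruence chain in the ideal step: choosing the correct side on which to multiply and recognizing that (UP-2) collapses $0\cdot z$ to $z$, so that the congruence lands on $x\cdot z$ rather than on some larger expression. Once that chain is set up correctly, every remaining step is a direct appeal to reflexivity, symmetry, transitivity, or to the already-established results, so no further difficulty is anticipated.
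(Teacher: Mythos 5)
Your proof is correct at every step: the chain $y\rho 0\Rightarrow y\cdot z\rho\, 0\cdot z=z\Rightarrow x\cdot(y\cdot z)\rho\, x\cdot z$, combined with $x\cdot(y\cdot z)\rho 0$ and transitivity, is exactly what the second UP-ideal axiom requires, the appeal to Theorem \ref{UP12} legitimately yields the UP-subalgebra half of \ref{UP17_1}, and the reduction of \ref{UP17_3} and \ref{UP17_4} to \ref{UP17_1} via the observation that Definitions \ref{UP8} and \ref{UP11} both force $0\in(x)_{\rho}$ is sound, since $0\in(x)_{\rho}$ is equivalent to $(x)_{\rho}=(0)_{\rho}$. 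Note that this paper states Theorem \ref{UP17} as a preliminary quoted from \cite{Iampan2014} without reproducing a proof, so there is no in-paper argument to compare against; your argument is the natural one and is self-contained relative to the results the paper does state.
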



\begin{theorem}\label{UP19}\cite{Iampan2014}
Let $A$ be a UP-algebra and $B$ a UP-ideal of $A$.
Then the following statements hold:
\begin{enumerate}[label=\textrm{(\arabic*)}]
\item\label{UP19_1} the $\sim_{B}$-class $(0)_{\sim_{B}}$ is a UP-ideal and a UP-subalgebra of $A$ which $B=(0)_{\sim_{B}}$,
\item\label{UP19_3} a $\sim_{B}$-class $(x)_{\sim_{B}}$ is a UP-ideal of $A$ if and only if $x\in B$,
\item\label{UP19_4} a $\sim_{B}$-class $(x)_{\sim_{B}}$ is a UP-subalgebra of $A$ if and only if $x\in B$, and
\item\label{UP19_2} $(A/\sim_{B},\ast,(0)_{\sim_{B}})$ is a UP-algebra under the $\ast$ multiplication defined by $(x)_{\sim_{B}}\ast(y)_{\sim_{B}}=(x\cdot y)_{\sim_{B}}$ for all $x,y\in A$, called the \textit{quotient UP-algebra} of $A$ induced by the congruence $\sim_{B}$.
\end{enumerate}
\end{theorem}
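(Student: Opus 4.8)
The plan is to reduce everything to the single computation that $(0)_{\sim_{B}}=B$, after which parts \ref{UP19_1}--\ref{UP19_4} fall out of the general congruence results already established. First I would compute the class $(0)_{\sim_{B}}$ directly from the defining relation \eqref{eq9}: for $y\in A$ we have $y\sim_{B}0$ if and only if $y\cdot 0\in B$ and $0\cdot y\in B$. By (UP-3) the first condition is automatic, since $y\cdot 0=0\in B$, and by (UP-2) the second reads $0\cdot y=y\in B$. Hence $y\sim_{B}0$ if and only if $y\in B$, so $(0)_{\sim_{B}}=B$.

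With this identity in hand, part \ref{UP19_1} is immediate: by Proposition \ref{UP14} the relation $\sim_{B}$ is a congruence, so Theorem \ref{UP17} \ref{UP17_1} shows $(0)_{\sim_{B}}$ is a UP-ideal and a UP-subalgebra of $A$, and we have just shown it equals $B$. (Alternatively, $B$ is a UP-ideal by hypothesis and a UP-subalgebra by Theorem \ref{UP12}.) For parts \ref{UP19_3} and \ref{UP19_4} I would invoke Theorem \ref{UP17} \ref{UP17_3} and \ref{UP17_4}, which characterize when a class $(x)_{\sim_{B}}$ is a UP-ideal (respectively a UP-subalgebra) by the condition $x\sim_{B}0$; translating $x\sim_{B}0$ into $x\in B$ via the computation above gives exactly the stated criteria.

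The substance of the theorem lies in part \ref{UP19_2}. Here I would first check that $\ast$ is well defined. If $(x)_{\sim_{B}}=(x')_{\sim_{B}}$ and $(y)_{\sim_{B}}=(y')_{\sim_{B}}$, then $x\sim_{B}x'$ and $y\sim_{B}y'$, and applying the congruence property of Definition \ref{UP15} twice yields $x\cdot y\sim_{B}x'\cdot y\sim_{B}x'\cdot y'$, so $(x\cdot y)_{\sim_{B}}=(x'\cdot y')_{\sim_{B}}$. Next I would verify the four UP-algebra axioms for $(A/\sim_{B},\ast,(0)_{\sim_{B}})$ by lifting each identity from $A$ through the rule $(x)_{\sim_{B}}\ast(y)_{\sim_{B}}=(x\cdot y)_{\sim_{B}}$: axioms (UP-1), (UP-2) and (UP-3) reduce respectively to (UP-1), (UP-2) and (UP-3) for $A$ evaluated inside a single class.

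The main obstacle is (UP-4) for $\ast$, the only axiom that is not a direct push-forward of an identity. Assuming $(x)_{\sim_{B}}\ast(y)_{\sim_{B}}=(0)_{\sim_{B}}$ and $(y)_{\sim_{B}}\ast(x)_{\sim_{B}}=(0)_{\sim_{B}}$, one obtains $(x\cdot y)_{\sim_{B}}=(0)_{\sim_{B}}$ and $(y\cdot x)_{\sim_{B}}=(0)_{\sim_{B}}$, that is, $x\cdot y\sim_{B}0$ and $y\cdot x\sim_{B}0$. Using $(0)_{\sim_{B}}=B$ once more, this says $x\cdot y\in B$ and $y\cdot x\in B$, which is precisely the definition of $x\sim_{B}y$; hence $(x)_{\sim_{B}}=(y)_{\sim_{B}}$, as required. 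Thus the single computation $(0)_{\sim_{B}}=B$ is used at both ends of the argument, and no properties of UP-ideals beyond those already recorded are needed.
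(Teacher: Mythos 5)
Your proof is correct, but note that this paper does not actually prove Theorem \ref{UP19}: it is quoted from \cite{Iampan2014} as a preliminary, so there is no in-paper argument to compare against. Judged on its own merits, your reduction is sound and well organized. The computation $(0)_{\sim_{B}}=B$ via (UP-2), (UP-3) and $0\in B$ is correct; parts \ref{UP19_1}, \ref{UP19_3} and \ref{UP19_4} then do follow immediately from Proposition \ref{UP14} together with Theorem \ref{UP17}, exactly as you say. For part \ref{UP19_2}, your well-definedness check correctly uses the congruence property of Definition \ref{UP15} twice plus transitivity, the lifts of (UP-1)--(UP-3) are routine, and you identified the genuinely nontrivial point: (UP-4) in the quotient is not inherited from (UP-4) in $A$ but falls out of the definition of $\sim_{B}$ itself, since $(x\cdot y)_{\sim_{B}}=(0)_{\sim_{B}}=(y\cdot x)_{\sim_{B}}$ translates, via $(0)_{\sim_{B}}=B$, into precisely the condition $x\sim_{B}y$ of \eqref{eq9}. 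This is the same mechanism the present paper exploits later (e.g., in the kernel computations of Theorems \ref{ISO2} and \ref{ISO3}, which use $h=0\cdot h\in K$), so your argument is fully consistent with how the result is used downstream. One cosmetic remark: in part \ref{UP19_1} your parenthetical alternative (that $B$ is a UP-subalgebra by Theorem \ref{UP12}) is a nice shortcut that avoids Theorem \ref{UP17} entirely for that clause.
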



\begin{definition}\label{UP22}\cite{Iampan2014}
Let $(A,\cdot,0)$ and $(A^{\prime},\cdot^{\prime},0^{\prime})$ be UP-algebras.
A mapping $f$ from $A$ to $A^{\prime}$ is called a \textit{UP-homomorphism} if
\begin{equation*}
f(x\cdot y)=f(x)\cdot^{\prime}f(y) \textrm{ for all } x,y\in A.
\end{equation*}

A UP-homomorphism $f\colon A\to A^{\prime}$ is called a
\begin{enumerate}[label=\textrm{(\arabic*)}]
\item\label{UP22_1} \textit{UP-epimorphism} if $f$ is surjective,
\item\label{UP22_2} \textit{UP-monomorphism} if $f$ is injective,
\item\label{UP22_3} \textit{UP-isomorphism} if $f$ is bijective. Moreover, we say $A$ is \textit{UP-isomorphic} to $A^{\prime}$, symbolically, $A\cong A^{\prime}$, if there is a UP-isomorphism from $A$ to $A^{\prime}$.
\end{enumerate}

Let $f$ be a mapping from $A$ to $A^{\prime}$, and let $B$ be a nonempty subset of $A$, and $B^{\prime}$ of $A^{\prime}$.
The set $\{f(x)\mid x\in B\}$ is called the \textit{image} of $B$ under $f$, denoted by $f(B)$.
In particular, $f(A)$ is called the \textit{image} of $f$, denoted by $\mathrm{Im}(f)$.
Dually, the set $\{x\in A\mid f(x)\in B^{\prime}\}$ is said the \textit{inverse image} of $B^{\prime}$ under $f$, symbolically, $f^{-1}(B^{\prime})$.
Especially, we say $f^{-1}(\{0^{\prime}\})$ is the \textit{kernel} of $f$, written by $\mathrm{Ker}(f)$. That is,
\begin{equation*}
\mathrm{Im}(f)=\{f(x)\in A^{\prime}\mid x\in A\}
\end{equation*}
and
\begin{equation*}
\mathrm{Ker}(f)=\{x\in A\mid f(x)=0^{\prime}\}.
\end{equation*}
\end{definition}

In fact it is easy to show the following theorem.

\begin{theorem}\label{UP31}\cite{Iampan2014}
Let $A,B$ and $C$ be UP-algebras.
Then the following statements hold:
\begin{enumerate}[label=\textrm{(\arabic*)}]
\item\label{UP31_1} the identity mapping $I_{A}\colon A\to A$ is a UP-isomorphism,
\item\label{UP31_2} if $f\colon A\to B$ is a UP-isomorphism, then $f^{-1}\colon B\to A$ is a UP-isomorphism, and
\item\label{UP31_3} if $f\colon A\to B$ and $g\colon B\to C$ are UP-isomorphisms, then $g\circ f\colon A\to C$ is a UP-isomorphism.
\end{enumerate}
\end{theorem}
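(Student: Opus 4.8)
The plan is to verify, for each of the three parts, the two defining requirements of a UP-isomorphism separately: that the map in question is bijective, and that it is a UP-homomorphism. The bijectivity assertions are purely set-theoretic and carry over unchanged from the general theory of maps — the identity is a bijection, the inverse of a bijection is a bijection, and a composite of two bijections is a bijection — so in each case the only thing demanding an argument is the homomorphism identity $h(x\cdot y)=h(x)\cdot' h(y)$.

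For part \ref{UP31_1}, I would observe that $I_{A}(x\cdot y)=x\cdot y=I_{A}(x)\cdot I_{A}(y)$ holds trivially from the definition of the identity map, so $I_{A}$ is a UP-homomorphism; being also bijective, it is a UP-isomorphism. Part \ref{UP31_3} is equally direct: for $x,y\in A$ I would chain the homomorphism properties of $f$ and $g$, writing $(g\circ f)(x\cdot y)=g(f(x\cdot y))=g(f(x)\cdot' f(y))=g(f(x))\cdot'' g(f(y))=(g\circ f)(x)\cdot''(g\circ f)(y)$, where $\cdot'$ and $\cdot''$ denote the operations of $B$ and $C$; this establishes the homomorphism property, and composing bijections gives bijectivity.

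Part \ref{UP31_2} is the only one requiring a small maneuver. To show $f^{-1}$ is a UP-homomorphism I would take arbitrary $u,v\in B$ and, using that $f$ is surjective, pick (uniquely, by injectivity) elements $x=f^{-1}(u)$ and $y=f^{-1}(v)$ in $A$ so that $f(x)=u$ and $f(y)=v$. Then the homomorphism property of $f$ gives $u\cdot' v=f(x)\cdot' f(y)=f(x\cdot y)$, and applying $f^{-1}$ to both sides yields $f^{-1}(u\cdot' v)=x\cdot y=f^{-1}(u)\cdot f^{-1}(v)$, as required.

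I do not anticipate any genuine obstacle here: all three statements are formal consequences of the definitions, exactly paralleling the corresponding facts for group or ring isomorphisms. If anything, the only point needing care is the passage to preimages in part \ref{UP31_2}, where one must invoke both surjectivity (to guarantee the preimages exist) and injectivity (to guarantee $f^{-1}$ is well defined as a function) — but both are available since $f$ is assumed bijective. No UP-algebra axiom beyond the UP-homomorphism definition is actually needed.
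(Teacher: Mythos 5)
Your proposal is correct and is exactly the standard argument intended here: the paper itself omits the proof, stating only ``In fact it is easy to show the following theorem'' and citing \cite{Iampan2014}, and the routine verification you give (identity and composition by direct computation, the inverse via preimages $x=f^{-1}(u)$, $y=f^{-1}(v)$ and the identity $f^{-1}(u\cdot' v)=x\cdot y$) is the one such a remark presupposes. Your closing observation is also accurate: since a UP-isomorphism is by definition just a bijective UP-homomorphism, no UP-algebra axiom beyond the homomorphism condition is needed.
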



\begin{theorem}\label{UP23}\cite{Iampan2014}
Let $A$ be a UP-algebra and $B$ a UP-ideal of $A$.
Then the mapping $\pi_{B}\colon A\to A/\sim_{B}$ defined by $\pi_{B}(x)=(x)_{\sim_{B}}$ for all $x\in A$ is a UP-epimorphism, called the \textit{natural projection} from $A$ to $A/\sim_{B}$.
\end{theorem}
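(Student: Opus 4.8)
The plan is to verify the two defining conditions of a UP-epimorphism directly from the definitions, leaning on the quotient structure already established in Theorem~\ref{UP19}. Recall that a UP-epimorphism is a surjective UP-homomorphism (Definition~\ref{UP22}~\ref{UP22_1}), so I must check that $\pi_{B}$ preserves the operation and that it is onto $A/\sim_{B}$. Before either check I would note that $\pi_{B}$ is a genuine mapping: for each $x\in A$ the class $(x)_{\sim_{B}}$ is an element of $A/\sim_{B}$, so $\pi_{B}$ is well defined as a function.

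For the homomorphism property, I would take arbitrary $x,y\in A$ and compute both sides of the identity $\pi_{B}(x\cdot y)=\pi_{B}(x)\ast\pi_{B}(y)$. By the definition of $\pi_{B}$, the left-hand side is $(x\cdot y)_{\sim_{B}}$. For the right-hand side, the definition of $\pi_{B}$ gives $\pi_{B}(x)\ast\pi_{B}(y)=(x)_{\sim_{B}}\ast(y)_{\sim_{B}}$, and the definition of the quotient operation $\ast$ in Theorem~\ref{UP19}~\ref{UP19_2} rewrites this as $(x\cdot y)_{\sim_{B}}$. Thus both sides coincide, and $\pi_{B}$ is a UP-homomorphism.

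For surjectivity, I would take an arbitrary element of $A/\sim_{B}$. By the description of the quotient set, every such element is a $\sim_{B}$-class of the form $(x)_{\sim_{B}}$ for some $x\in A$. Since $\pi_{B}(x)=(x)_{\sim_{B}}$, this element lies in the image of $\pi_{B}$, so $\pi_{B}$ is surjective. Combining the two parts, $\pi_{B}$ is a surjective UP-homomorphism, that is, a UP-epimorphism.

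I do not anticipate any real obstacle here. The substantive content, namely that $\ast$ is \emph{well defined} (so that $(x\cdot y)_{\sim_{B}}$ does not depend on the chosen representatives) and that $A/\sim_{B}$ is a UP-algebra under $\ast$, has already been dispatched in Theorem~\ref{UP19}~\ref{UP19_2}, which I am entitled to invoke. Once that is in hand, the homomorphism identity holds essentially by the definition of $\ast$, and surjectivity is automatic from the way the quotient set $A/\sim_{B}$ is constructed as the collection of all $\sim_{B}$-classes.
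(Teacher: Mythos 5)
Your proposal is correct, and it is exactly the standard argument: the paper itself states Theorem~\ref{UP23} without proof (citing \cite{Iampan2014}), and the intended verification is precisely what you give --- the homomorphism identity $\pi_{B}(x\cdot y)=(x\cdot y)_{\sim_{B}}=(x)_{\sim_{B}}\ast(y)_{\sim_{B}}=\pi_{B}(x)\ast\pi_{B}(y)$ by the definition of $\ast$ in Theorem~\ref{UP19}~\ref{UP19_2}, and surjectivity because every element of $A/\sim_{B}$ is by construction a class $(x)_{\sim_{B}}=\pi_{B}(x)$. You also correctly identify that the only substantive content (well-definedness of $\ast$ and the UP-algebra structure on the quotient) is already carried by Theorem~\ref{UP19}, so nothing further is needed.
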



On a UP-algebra $A=(A,\cdot,0)$, we define a binary relation $\leq$ on $A$ as follows: for all $x,y\in A$,
\begin{equation}\label{eq1}
x\leq y \textrm{ if and only if } x\cdot y=0.
\end{equation}

\begin{proposition}\label{UP4}\cite{Iampan2014}
Let $A$ be a UP-algebra with a binary relation $\leq$ defined by \eqref{eq1}.
Then $(A,\leq)$ is a partially ordered set with $0$ as the greatest element.
\end{proposition}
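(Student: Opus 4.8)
The plan is to verify directly the three defining properties of a partial order — reflexivity, antisymmetry, and transitivity — and then separately to check that $0$ is the greatest element. In each case the strategy is simply to unfold the relation $\leq$ through its definition \eqref{eq1} into a multiplicative statement, and then to match that statement against an axiom of Definition \ref{UP1} or an item of Proposition \ref{UP2}.

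First I would check reflexivity. For any $x\in A$, the condition $x\leq x$ unfolds to $x\cdot x=0$, which is exactly Proposition \ref{UP2} \ref{UP2_1}. Next, for antisymmetry, suppose $x\leq y$ and $y\leq x$; by \eqref{eq1} this means $x\cdot y=0$ and $y\cdot x=0$, and then (UP-4) immediately gives $x=y$. For transitivity, suppose $x\leq y$ and $y\leq z$, that is, $x\cdot y=0$ and $y\cdot z=0$; Proposition \ref{UP2} \ref{UP2_13} then yields $x\cdot z=0$, which is precisely $x\leq z$. Together these three steps establish that $(A,\leq)$ is a partially ordered set.

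Finally, to see that $0$ is the greatest element, I would observe that for every $x\in A$ the condition $x\leq 0$ means $x\cdot 0=0$, which is axiom (UP-3). Hence every element lies below $0$, so $0$ is the greatest element of the poset.

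There is essentially no real obstacle here: each of the four requirements corresponds to a single axiom or a single item of Proposition \ref{UP2}, and the only care needed is to orient the relation correctly. In particular one must note that the greatest-element condition corresponds to $x\cdot 0=0$ rather than $0\cdot x=0$, so that (UP-3) — and not (UP-2) — is the relevant fact; reading the direction of $\leq$ the wrong way is the sole pitfall in an otherwise mechanical verification.
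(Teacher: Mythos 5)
Your proof is correct: reflexivity from Proposition \ref{UP2} \ref{UP2_1}, antisymmetry from (UP-4), transitivity from Proposition \ref{UP2} \ref{UP2_13}, and $0$ as greatest element from (UP-3) via $x\cdot 0=0$ is exactly the standard verification (the paper itself states this proposition without proof, citing \cite{Iampan2014}, where the argument is the same). Your remark about orienting $\leq$ correctly --- using (UP-3) rather than (UP-2) for the greatest-element condition --- is the right point of care and is handled properly.
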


We often call the partial ordering $\leq$ defined by \eqref{eq1} the \textit{UP-ordering} on $A$.
From now on, the symbol $\leq$ will be used to denote the UP-ordering, unless specified otherwise.


\begin{theorem}\label{UP24}\cite{Iampan2014}
Let $(A,\cdot,0_{A})$ and $(B,\ast,0_{B})$ be UP-algebras and let $f\colon A\to B$ be a UP-homomorphism.
Then the following statements hold:
\begin{enumerate}[label=\textrm{(\arabic*)}]
\item\label{UP24_1} $f(0_{A})=0_{B}$,
\item\label{UP24_8} for any $x,y\in A$, if $x\leq y$, then $f(x)\leq f(y)$,
\item\label{UP24_2} if $C$ is a UP-subalgebra of $A$, then the image $f(C)$ is a UP-subalgebra of $B$. In particular, $\mathrm{Im}(f)$ is a UP-subalgebra of $B$,
\item\label{UP24_5} if $D$ is a UP-subalgebra of $B$, then the inverse image $f^{-1}(D)$ is a UP-subalgebra of $A$. In particular, $\mathrm{Ker}(f)$ is a UP-subalgebra of $A$,
\item\label{UP24_6} if $C$ is a UP-ideal of $A$ such that $\mathrm{Ker}(f)\subseteq C$, then the image $f(C)$ is a UP-ideal of $f(A)$,
\item\label{UP24_7} if $D$ is a UP-ideal of $B$, then the inverse image $f^{-1}(D)$ is a UP-ideal of $A$. In particular, $\mathrm{Ker}(f)$ is a UP-ideal of $A$, and
\item\label{UP24_3} $\mathrm{Ker}(f)=\{0_{A}\}$ if and only if $f$ is injective.
\end{enumerate}
\end{theorem}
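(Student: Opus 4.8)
The plan is to treat the seven items in the stated order, reusing earlier parts as they become available; throughout I abbreviate $K=\mathrm{Ker}(f)$ and use the defining identity $f(x\cdot y)=f(x)\ast f(y)$ without further comment. For \ref{UP24_1} I would start from Proposition \ref{UP2} \ref{UP2_1}, which gives $0_A\cdot 0_A=0_A$; applying $f$ yields $f(0_A)=f(0_A)\ast f(0_A)$, and since $f(0_A)\ast f(0_A)=0_B$ by the same identity in $B$, we conclude $f(0_A)=0_B$. Item \ref{UP24_8} is then immediate: if $x\cdot y=0_A$ then $f(x)\ast f(y)=f(x\cdot y)=f(0_A)=0_B$, so $f(x)\leq f(y)$.

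For the two subalgebra statements \ref{UP24_2} and \ref{UP24_5} I would appeal to Proposition \ref{UP20}, reducing each to a closure check. For $f(C)$: given $a,b\in C$, closure of $C$ gives $a\cdot b\in C$, so $f(a)\ast f(b)=f(a\cdot b)\in f(C)$. For $f^{-1}(D)$: if $f(x),f(y)\in D$ then $f(x\cdot y)=f(x)\ast f(y)\in D$, so $x\cdot y\in f^{-1}(D)$. The ``in particular'' clauses follow by taking $C=A$ and $D=\{0_B\}$, the latter being a UP-subalgebra of $B$.

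The UP-ideal statements carry the real content. Item \ref{UP24_7} is a direct check of Definition \ref{UP8}: $0_A\in f^{-1}(D)$ since $f(0_A)=0_B\in D$, and if $x\cdot(y\cdot z)\in f^{-1}(D)$ and $y\in f^{-1}(D)$, then rewriting $f(x\cdot(y\cdot z))=f(x)\ast(f(y)\ast f(z))$ and applying Definition \ref{UP8} \ref{UP8_2} for $D$ gives $f(x)\ast f(z)=f(x\cdot z)\in D$, i.e.\ $x\cdot z\in f^{-1}(D)$; the kernel case is $D=\{0_B\}$. The main obstacle is \ref{UP24_6}, where the hypothesis $K\subseteq C$ must be used. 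The key lemma I would establish first is $f^{-1}(f(C))=C$: the inclusion $\supseteq$ is trivial, and for $\subseteq$, if $f(x)=f(c)$ with $c\in C$ then $f(c\cdot x)=f(c)\ast f(x)=0_B$ forces $c\cdot x\in K\subseteq C$; since also $c\in C$, Theorem \ref{UP9} \ref{UP9_1} yields $x\in C$. With this in hand I would verify the ideal axioms for $f(C)$ inside $f(A)$ by transporting membership back through $f$: writing $u=f(x),v=f(y),w=f(z)$, if $u\ast(v\ast w)\in f(C)$ and $v\in f(C)$, then $x\cdot(y\cdot z)$ and $y$ both lie in $f^{-1}(f(C))=C$, so Definition \ref{UP8} \ref{UP8_2} for $C$ gives $x\cdot z\in C$ and hence $u\ast w=f(x\cdot z)\in f(C)$.

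Finally, for \ref{UP24_3} the forward direction combines injectivity with \ref{UP24_1}: $f(x)=0_B=f(0_A)$ forces $x=0_A$. For the converse I would take $f(x)=f(y)$, deduce $f(x\cdot y)=f(x)\ast f(y)=0_B$ and symmetrically $f(y\cdot x)=0_B$, so that $x\cdot y,\,y\cdot x\in K=\{0_A\}$, and then (UP-4) gives $x=y$.
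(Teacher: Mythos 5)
Your proof is correct; note, though, that this paper never proves Theorem \ref{UP24} itself---it is quoted as a preliminary from \cite{Iampan2014}---so there is no in-paper proof to compare against line by line. Your route is the standard one, and your key lemma for \ref{UP24_6}, namely that $\mathrm{Ker}(f)\subseteq C$ forces $f^{-1}(f(C))=C$ (via $f(c\cdot x)=f(c)\bullet f(x)=0_{B}$, hence $c\cdot x\in\mathrm{Ker}(f)\subseteq C$, then Theorem \ref{UP9} \ref{UP9_1}), is precisely the computation this paper performs inline in its proof of the Fourth UP-isomorphism Theorem (Theorem \ref{ISO5} \ref{ISO5_1}), which confirms it is the intended mechanism; transporting the ideal axioms of $f(A)$ back through $f$ then works exactly as you describe, since every element of $f(A)$ is of the form $f(x)$. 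The only cosmetic omission is in \ref{UP24_5}: Proposition \ref{UP20} applies to \emph{nonempty} subsets, so before invoking closure you should record that $0_{A}\in f^{-1}(D)$, which follows from \ref{UP24_1} together with the fact that a UP-subalgebra $D$ contains $0_{B}$ by Definition \ref{UP11}; the same one-line remark also makes both ``in particular'' kernel clauses immediate, since $\{0_{B}\}$ is a UP-subalgebra and a UP-ideal of $B$.
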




\section{Main Results}


In this section, we construct the fundamental theorem of UP-homomorphisms in UP-algebras.
We also give an application of the theorem to the first, second, third and fourth UP-isomorphism theorems in UP-algebras.


\begin{theorem}\label{ISO0} (Fundamental Theorem of UP-homomorphisms)
Let $(A,\cdot,0_{A})$ and $(B,\bullet,0_{B})$ be UP-algebras, and $f\colon A\to B$ a UP-homomorphism.
Then there exists uniquely a UP-homomorphism $\varphi$ from $A/\sim_{\mathrm{Ker}(f)}$ to $B$ such that $f=\varphi\circ\pi_{\mathrm{Ker}(f)}$. Moreover,
\begin{enumerate}[label=\textrm{(\arabic*)}]
\item\label{ISO0_1} $\pi_{\mathrm{Ker}(f)}$ is a UP-epimorphism and $\varphi$ a UP-monomorphism, and
\item\label{ISO0_2} $f$ is a UP-epimorphism if and only if $\varphi$ is a UP-isomorphism.
\end{enumerate}
As $f$ makes the following diagram commute,
\begin{displaymath}
\xymatrix{
A \ar[r]^f \ar[d]_{\pi_{\mathrm{Ker}(f)}} & B \\
A/\sim_{\mathrm{Ker}(f)} \ar[ur]_{\varphi} & }
\end{displaymath}
\end{theorem}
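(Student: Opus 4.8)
The plan is to define the candidate map $\varphi\colon A/\sim_{\mathrm{Ker}(f)}\to B$ on representatives by $\varphi((x)_{\sim_{\mathrm{Ker}(f)}})=f(x)$, a definition that is forced on us by the commutativity requirement $f=\varphi\circ\pi_{\mathrm{Ker}(f)}$ together with the surjectivity of $\pi_{\mathrm{Ker}(f)}$ (Theorem \ref{UP23}). The crux of the argument---and the step I expect to be the main obstacle---is showing that this assignment is well-defined. Suppose $(x)_{\sim_{\mathrm{Ker}(f)}}=(y)_{\sim_{\mathrm{Ker}(f)}}$, which by Definition \ref{UP13} means $x\cdot y\in\mathrm{Ker}(f)$ and $y\cdot x\in\mathrm{Ker}(f)$. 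Since $f$ is a UP-homomorphism, this gives $f(x)\bullet f(y)=f(x\cdot y)=0_{B}$ and $f(y)\bullet f(x)=f(y\cdot x)=0_{B}$, and then (UP-4) in $B$ forces $f(x)=f(y)$. Hence $\varphi$ is well-defined.

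Next I would verify that $\varphi$ is a UP-homomorphism: using the quotient multiplication $\ast$ of Theorem \ref{UP19} \ref{UP19_2}, one computes $\varphi((x)_{\sim_{\mathrm{Ker}(f)}}\ast(y)_{\sim_{\mathrm{Ker}(f)}})=\varphi((x\cdot y)_{\sim_{\mathrm{Ker}(f)}})=f(x\cdot y)=f(x)\bullet f(y)=\varphi((x)_{\sim_{\mathrm{Ker}(f)}})\bullet\varphi((y)_{\sim_{\mathrm{Ker}(f)}})$. The commutativity $f=\varphi\circ\pi_{\mathrm{Ker}(f)}$ is then immediate, since $(\varphi\circ\pi_{\mathrm{Ker}(f)})(x)=\varphi((x)_{\sim_{\mathrm{Ker}(f)}})=f(x)$ for all $x\in A$. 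For uniqueness, if $\psi$ is any UP-homomorphism with $f=\psi\circ\pi_{\mathrm{Ker}(f)}$, then $\psi((x)_{\sim_{\mathrm{Ker}(f)}})=f(x)=\varphi((x)_{\sim_{\mathrm{Ker}(f)}})$ for every $x$; since $\pi_{\mathrm{Ker}(f)}$ is surjective, every class of $A/\sim_{\mathrm{Ker}(f)}$ admits such a representative, so $\psi=\varphi$.

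For part \ref{ISO0_1}, $\pi_{\mathrm{Ker}(f)}$ is a UP-epimorphism by Theorem \ref{UP23}. To see that $\varphi$ is a UP-monomorphism, I would compute its kernel: $(x)_{\sim_{\mathrm{Ker}(f)}}\in\mathrm{Ker}(\varphi)$ iff $f(x)=0_{B}$ iff $x\in\mathrm{Ker}(f)$, which by Theorem \ref{UP19} \ref{UP19_1} is precisely the zero class $(0_{A})_{\sim_{\mathrm{Ker}(f)}}=\mathrm{Ker}(f)$. Thus $\mathrm{Ker}(\varphi)=\{(0_{A})_{\sim_{\mathrm{Ker}(f)}}\}$ is trivial, and Theorem \ref{UP24} \ref{UP24_3}, applied to the UP-homomorphism $\varphi$, yields injectivity.

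Finally, for part \ref{ISO0_2}: if $f$ is a UP-epimorphism, then given $b\in B$ there is $x\in A$ with $f(x)=b$, whence $\varphi((x)_{\sim_{\mathrm{Ker}(f)}})=b$, so $\varphi$ is surjective; combined with the injectivity just established, $\varphi$ is a UP-isomorphism. Conversely, if $\varphi$ is a UP-isomorphism it is in particular surjective, and since $\pi_{\mathrm{Ker}(f)}$ is surjective as well, the composite $f=\varphi\circ\pi_{\mathrm{Ker}(f)}$ is surjective, i.e., $f$ is a UP-epimorphism. The only genuinely nontrivial ingredient throughout is the appeal to (UP-4) for well-definedness of $\varphi$; everything else is routine diagram-chasing backed by Theorems \ref{UP19}, \ref{UP23}, and \ref{UP24}.
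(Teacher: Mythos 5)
Your proposal is correct and follows essentially the same route as the paper's proof: the same definition of $\varphi$ on representatives, the same well-definedness argument via (UP-4), and the same uniqueness and surjectivity arguments for part \ref{ISO0_2}. The only deviation is the monomorphism step, where you show $\mathrm{Ker}(\varphi)=\{(0_{A})_{\sim_{\mathrm{Ker}(f)}}\}$ using Theorem \ref{UP19} \ref{UP19_1} and then invoke the kernel criterion of Theorem \ref{UP24} \ref{UP24_3}, whereas the paper argues directly that $\varphi((x)_{\sim_{\mathrm{Ker}(f)}})=\varphi((y)_{\sim_{\mathrm{Ker}(f)}})$ forces $x\cdot y,\,y\cdot x\in\mathrm{Ker}(f)$ via Proposition \ref{UP2} \ref{UP2_1}; both arguments are equally valid.
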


\begin{proof}
Put $K=\mathrm{Ker}(f)$.
By Theorem \ref{UP24} \ref{UP24_7}, we have $K$ is a UP-ideal of $A$.
It follows from Theorem \ref{UP19} \ref{UP19_2} that $(A/\sim_{K},\ast,(0_{A})_{\sim_{K}})$ is a UP-algebra.
Define
\begin{equation}\label{eqiso1}
\varphi\colon A/\sim_{K}\to B,(x)_{\sim_{K}}\mapsto f(x).
\end{equation}
Let $(x)_{\sim_{K}},(y)_{\sim_{K}}\in A/\sim_{K}$ be such that $(x)_{\sim_{K}}=(y)_{\sim_{K}}$.
Then $x\sim_{K}y$, so $x\cdot y\in K$ and $y\cdot x\in K$.
Thus
\begin{equation*}
f(x)\bullet f(y)=f(x\cdot y)=0_{B} \textrm{ and } f(y)\bullet f(x)=f(y\cdot x)=0_{B}.
\end{equation*}
By (UP-4), we have $f(x)=f(y)$ and so $\varphi((x)_{\sim_{K}})=\varphi((y)_{\sim_{K}})$.
Thus $\varphi$ is a mapping.
For any $x,y\in A$, we see that
\begin{equation*}
\varphi((x)_{\sim_{K}}\ast(y)_{\sim_{K}})=\varphi((x\cdot y)_{\sim_{K}})=f(x\cdot y)=f(x)\bullet f(y)=\varphi((x)_{\sim_{K}})\bullet\varphi((y)_{\sim_{K}}).
\end{equation*}
Thus $\varphi$ is a UP-homomorphism.
Also, since
\begin{equation*}
(\varphi\circ\pi_{K})(x)=\varphi(\pi_{K}(x))=\varphi((x)_{\sim_{K}})=f(x) \textrm{ for all } x\in A,
\end{equation*}
we obtain $f=\varphi\circ\pi_{K}$. We have shown the existence.
Let $\varphi^{\prime}$ be a mapping from $A/\sim_{K}$ to $B$ such that $f=\varphi^{\prime}\circ\pi_{K}$.
Then for any $(x)_{\sim_{K}}\in A/\sim_{K}$, we have
\begin{equation*}
\varphi^{\prime}((x)_{\sim_{K}})=\varphi^{\prime}(\pi_{K}(x))=(\varphi^{\prime}\circ\pi_{K})(x)=f(x)=(\varphi\circ\pi_{K})(x)=\varphi(\pi_{K}(x))=\varphi((x)_{\sim_{K}}).
\end{equation*}
Hence, $\varphi=\varphi^{\prime}$, showing the uniqueness.

\noindent\ref*{ISO0_1} By Theorem \ref{UP23}, we have $\pi_{K}$ is a UP-epimorphism.
Also, let $(x)_{\sim_{K}},(y)_{\sim_{K}}\in A/\sim_{K}$ be such that $\varphi((x)_{\sim_{K}})=\varphi((y)_{\sim_{K}})$.
Then $f(x)=f(y)$, and it follows from Proposition \ref{UP2} \ref{UP2_1} that
\begin{equation*}
f(x\cdot y)=f(x)\bullet f(y)=f(y)\bullet f(y)=0_{B},
\end{equation*}
that is, $x\cdot y\in K$.
Similarly, $y\cdot x\in K$.
Hence, $x\sim_{K}y$ and $(x)_{\sim_{K}}=(y)_{\sim_{K}}$.
Therefore, $\varphi$ a UP-monomorphism.

\noindent\ref*{ISO0_2} Assume that $f$ is a UP-epimorphism.
By \ref{ISO0_1}, it suffices to prove $\varphi$ is surjective.
Let $y\in B$. Then there exists $x\in A$ such that $f(x)=y$.
Thus $y=f(x)=\varphi((x)_{\sim_{K}})$, so $\varphi$ is surjective.
Hence, $\varphi$ is a UP-isomorphism.

Conversely, assume that $\varphi$ is a UP-isomorphism.
Then $\varphi$ is surjective.
Let $y\in B$. Then there exists $(x)_{\sim_{K}}\in A/\sim_{K}$ such that $\varphi((x)_{\sim_{K}})=y$.
Thus $f(x)=\varphi((x)_{\sim_{K}})=y$, so $f$ is surjective.
Hence, $f$ is a UP-epimorphism.
\end{proof}


\begin{theorem}\label{ISO1} (First UP-isomorphism Theorem)
Let $(A,\cdot,0_{A})$ and $(B,\bullet,0_{B})$ be UP-algebras, and $f\colon A\to B$ a UP-homomorphism.
Then
\begin{equation*}
A/\sim_{\mathrm{Ker}(f)}\cong\mathrm{Im}(f).
\end{equation*}
\end{theorem}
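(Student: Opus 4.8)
The plan is to deduce the statement directly from the Fundamental Theorem of UP-homomorphisms (Theorem \ref{ISO0}) by restricting the codomain of $f$ to its image. First I would observe that, by Theorem \ref{UP24} \ref{UP24_2}, $\mathrm{Im}(f)$ is a UP-subalgebra of $B$, and hence $(\mathrm{Im}(f),\bullet,0_{B})$ is itself a UP-algebra whose constant is $0_{B}$.

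Next I would define the corestriction $\bar{f}\colon A\to\mathrm{Im}(f)$ by $\bar{f}(x)=f(x)$ for all $x\in A$. Since $\bar{f}$ differs from $f$ only in its declared codomain and uses the same operation $\bullet$, it is again a UP-homomorphism, and it is surjective by the very definition of $\mathrm{Im}(f)$; thus $\bar{f}$ is a UP-epimorphism. Because the constant of the UP-algebra $\mathrm{Im}(f)$ is $0_{B}$, the preimage condition defining the kernel is unchanged, so $\mathrm{Ker}(\bar{f})=\{x\in A\mid\bar{f}(x)=0_{B}\}=\mathrm{Ker}(f)$.

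Finally I would apply Theorem \ref{ISO0} to the UP-homomorphism $\bar{f}\colon A\to\mathrm{Im}(f)$. This yields a UP-monomorphism $\varphi\colon A/\sim_{\mathrm{Ker}(\bar{f})}\to\mathrm{Im}(f)$ satisfying $\bar{f}=\varphi\circ\pi_{\mathrm{Ker}(\bar{f})}$; and since $\bar{f}$ is a UP-epimorphism, part \ref{ISO0_2} of that theorem upgrades $\varphi$ to a UP-isomorphism. Substituting $\mathrm{Ker}(\bar{f})=\mathrm{Ker}(f)$ then produces a UP-isomorphism $\varphi\colon A/\sim_{\mathrm{Ker}(f)}\to\mathrm{Im}(f)$, which is exactly the asserted $A/\sim_{\mathrm{Ker}(f)}\cong\mathrm{Im}(f)$.

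I would expect no real obstacle here: the entire substance is already packaged in the Fundamental Theorem, and the only point demanding a moment's care is the bookkeeping that restricting the codomain to the UP-subalgebra $\mathrm{Im}(f)$ leaves the kernel untouched (this rests on $0_{B}$ being the constant of $\mathrm{Im}(f)$, guaranteed by Theorem \ref{UP24} \ref{UP24_2}) while simultaneously converting $f$ into a genuine UP-epimorphism so that part \ref{ISO0_2} applies.
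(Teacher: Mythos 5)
Your proposal is correct and follows essentially the same route as the paper: the paper likewise invokes Theorem \ref{UP24} \ref{UP24_2} to see that $\mathrm{Im}(f)$ is a UP-subalgebra of $B$, regards $f\colon A\to\mathrm{Im}(f)$ as a UP-epimorphism, and applies Theorem \ref{ISO0} \ref{ISO0_2}. Your only addition is the explicit (and correct) bookkeeping that the corestriction preserves the kernel, which the paper leaves implicit.
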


\begin{proof}
By Theorem \ref{UP24} \ref{UP24_2}, we have $\mathrm{Im}(f)$ is a UP-subalgebra of $B$.
Thus $f\colon A\to\mathrm{Im}(f)$ is a UP-epimorphism.
Applying Theorem \ref{ISO0} \ref{ISO0_2}, we obtain $A/\sim_{\mathrm{Ker}(f)}\cong\mathrm{Im}(f)$.
\end{proof}


\begin{lemma}\label{ISO4}
Let $(A,\cdot,0)$ be a UP-algebra, $H$ a UP-subalgebra of $A$, and $K$ a UP-ideal of $A$.
Denote $HK=\bigcup_{h\in H}(h)_{\sim_{K}}$.
Then $HK$ is a UP-subalgebra of $A$.
\end{lemma}

\begin{proof}
Clearly, $\emptyset\neq HK\subseteq A$.
Let $a,b\in HK$.
Then $a\in(x)_{\sim_{K}}$ and $b\in(y)_{\sim_{K}}$ for some $x,y\in H$, so $(a)_{\sim_{K}}=(x)_{\sim_{K}}$ and $(b)_{\sim_{K}}=(y)_{\sim_{K}}$.
Thus
\begin{equation*}
(a\cdot b)_{\sim_{K}}=(a)_{\sim_{K}}\ast(b)_{\sim_{K}}=(x)_{\sim_{K}}\ast(y)_{\sim_{K}}=(x\cdot y)_{\sim_{K}},
\end{equation*}
so $a\cdot b\in(x\cdot y)_{\sim_{K}}$.
Since $x,y\in H$, it follows from Proposition \ref{UP20} that $x\cdot y\in H$.
Thus $a\cdot b\in(x\cdot y)_{\sim_{K}}\subseteq HK$.
Hence, $HK$ is a UP-subalgebra of $A$.
\end{proof}


\begin{theorem}\label{ISO2} (Second UP-isomorphism Theorem)
Let $(A,\cdot,0)$ be a UP-algebra, $H$ a UP-subalgebra of $A$, and $K$ a UP-ideal of $A$.
Denote $HK/\sim_{K}=\{(x)_{\sim_{K}}\mid x\in HK\}$.
Then
\begin{equation*}
H/\sim_{H\cap K}\cong HK/\sim_{K}.
\end{equation*}
\end{theorem}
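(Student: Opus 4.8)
The plan is to manufacture a single UP-homomorphism out of $H$ whose kernel is $H\cap K$ and whose image is all of $HK/\sim_K$, and then to feed it into the First UP-isomorphism Theorem (Theorem \ref{ISO1}). Before the two quotients even make sense I would record two structural facts. First, $H\cap K$ is a UP-ideal of the UP-subalgebra $H$: it contains $0$, and for $x,a,b\in H$ with $a,b\in H\cap K$, Theorem \ref{UP9} \ref{UP9_3} (applied in $A$ with the UP-ideal $K$) gives $(b\cdot(a\cdot x))\cdot x\in K$, while closure of $H$ (Proposition \ref{UP20}) gives $(b\cdot(a\cdot x))\cdot x\in H$; hence the element lies in $H\cap K$ and Theorem \ref{UP30} applies. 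Second, by Lemma \ref{ISO4} $HK$ is a UP-subalgebra of $A$, and since $(0)_{\sim_K}=K$ (Theorem \ref{UP19} \ref{UP19_1}) and $0\in H$, we have $K\subseteq HK$; the identical Theorem \ref{UP30}/Theorem \ref{UP9} \ref{UP9_3} argument then shows $K$ is a UP-ideal of $HK$, so $HK/\sim_K$ is a genuine UP-algebra.

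Next I would define $f\colon H\to HK/\sim_K$ as the restriction to $H$ of the natural projection $\pi_K\colon HK\to HK/\sim_K$, that is, $f(x)=(x)_{\sim_K}$ for $x\in H$. This is a UP-homomorphism because $\pi_K$ is one (Theorem \ref{UP23}) and $H$ is closed under $\cdot$, so $f(x\cdot y)=(x\cdot y)_{\sim_K}=(x)_{\sim_K}\ast(y)_{\sim_K}=f(x)\ast f(y)$. The two computations that carry the whole proof are surjectivity and the kernel. For surjectivity, any class in $HK/\sim_K$ has a representative $x\in HK$, and by the very definition $HK=\bigcup_{h\in H}(h)_{\sim_K}$ there is $h\in H$ with $x\sim_K h$, whence $(x)_{\sim_K}=(h)_{\sim_K}=f(h)$; thus $f$ is a UP-epimorphism. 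For the kernel, $f(x)=(0)_{\sim_K}$ iff $x\sim_K 0$, i.e. $x\cdot 0\in K$ and $0\cdot x\in K$, which by (UP-3) and (UP-2) reduces to $x\in K$; intersecting with the domain gives $\mathrm{Ker}(f)=H\cap K$.

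Finally I would invoke Theorem \ref{ISO1}: $f$ is a UP-homomorphism with $\mathrm{Ker}(f)=H\cap K$ and $\mathrm{Im}(f)=HK/\sim_K$, so the First UP-isomorphism Theorem yields $H/\sim_{H\cap K}\cong\mathrm{Im}(f)=HK/\sim_K$, which is exactly the claim.

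I expect the only genuine subtlety to be the surjectivity step, since that is precisely where the engineered definition of $HK$ as a union of $\sim_K$-classes is essential; without it, a representative of a class in $HK/\sim_K$ need not be $\sim_K$-related to any element of $H$, and the map $f$ would fail to hit every class. The prerequisite checks that $H\cap K$ is a UP-ideal of $H$ and that $K$ is a UP-ideal of $HK$ are routine applications of Theorem \ref{UP30} and Theorem \ref{UP9} \ref{UP9_3}, but they must not be skipped, as they are exactly what make both sides of the claimed isomorphism well-defined.
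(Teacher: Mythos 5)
Your proposal is correct and follows essentially the same route as the paper: the same map $f\colon H\to HK/\sim_{K}$, $x\mapsto(x)_{\sim_K}$, the same surjectivity argument via $HK=\bigcup_{h\in H}(h)_{\sim_K}$, the same kernel computation $\mathrm{Ker}(f)=H\cap K$, and the same final appeal to the First UP-isomorphism Theorem. The only (harmless) divergence is in the well-definedness preliminaries: you justify that $HK/\sim_K$ is a UP-algebra by showing $K$ is a UP-ideal of $HK$ via Theorem \ref{UP30} and Theorem \ref{UP9} \ref{UP9_3}, and separately verify that $H\cap K$ is a UP-ideal of $H$, whereas the paper simply observes that $HK/\sim_K$ is a UP-subalgebra of $A/\sim_K$ and lets Theorem \ref{UP24} \ref{UP24_7} (inside Theorem \ref{ISO1}) handle the ideal status of $\mathrm{Ker}(f)=H\cap K$ automatically.
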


\begin{proof}
By Lemma \ref{ISO4}, we have $HK$ is a UP-subalgebra of $A$.
Then it is easy to check that $HK/\sim_{K}$ is a UP-subalgebra of $A/\sim_{K}$, thus $(HK/\sim_{K},\ast,(0)_{\sim_{K}})$ itself is a UP-algebra.
Also, it is obvious that $H\subseteq HK$, then
\begin{equation}\label{eqiso2}
f\colon H\to HK/\sim_{K},x\mapsto(x)_{\sim_{K}},
\end{equation}
is a mapping.
For any $x,y\in H$, we have
\begin{equation*}
f(x\cdot y)=(x\cdot y)_{\sim_{K}}=(x)_{\sim_{K}}\ast(y)_{\sim_{K}}=f(x)\ast f(y).
\end{equation*}
Thus $f$ is a UP-homomorphism.
We shall show that $f$ is a UP-epimorphism with $\mathrm{Ker}(f)=H\cap K$.
For any $(x)_{\sim_{K}}\in HK/\sim_{K}$, we have $x\in HK=\bigcup_{h\in H}(h)_{\sim_{K}}$.
Then there exists $h\in H$ such that $x\in(h)_{\sim_{K}}$ and so $(x)_{\sim_{K}}=(h)_{\sim_{K}}$.
Thus $f(h)=(h)_{\sim_{K}}=(x)_{\sim_{K}}$.
Therefore, $f$ is a UP-epimorphism.
Also, for any $h\in H$, if $h\in\mathrm{Ker}(f)$, then $f(h)=(0)_{\sim_{K}}$.
Since $f(h)=(h)_{\sim_{K}}$, we obtain $(h)_{\sim_{K}}=(0)_{\sim_{K}}$.
By (UP-2) and \eqref{eq9}, we have $h=0\cdot h\in K$.
Thus $h\in H\cap K$, that is, $\mathrm{Ker}(f)\subseteq H\cap K$.
On the other hand, if $h\in H\cap K$, by $h\in H$, $f(h)$ is well-defined, by $h\in K$ and $0\in K$, $h\cdot 0\in K$ and $0\cdot h\in K$.
By \eqref{eq9}, we have $h\sim_{K} 0$ and so $(h)_{\sim_{K}}=(0)_{\sim_{K}}$.
Thus $f(h)=(h)_{\sim_{K}}=(0)_{\sim_{K}}$.
So, $h\in\mathrm{Ker}(f)$, that is, $H\cap K\subseteq\mathrm{Ker}(f)$.
Therefore, $\mathrm{Ker}(f)=H\cap K$.
Now, Theorem \ref{ISO1} gives $H/\sim_{H\cap K}\cong HK/\sim_{K}$.
\end{proof}


\begin{theorem}\label{ISO3} (Third UP-isomorphism Theorem)
Let $(A,\cdot,0)$ be a UP-algebra, and $H$ and $K$ UP-ideals of $A$ with $H\subseteq K$.
Then
\begin{equation*}
(A/\sim_{H})/\sim_{(K/\sim_{H})}\cong A/\sim_{K}.
\end{equation*}
\end{theorem}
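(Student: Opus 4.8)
The plan is to apply the First UP-isomorphism Theorem (Theorem \ref{ISO1}) to a single natural ``collapsing'' UP-epimorphism from $A/\sim_H$ onto $A/\sim_K$. First I would define the map
\[
f\colon A/\sim_H\to A/\sim_K,\qquad (x)_{\sim_H}\mapsto(x)_{\sim_K},
\]
and verify it is well defined. This is precisely the step where the hypothesis $H\subseteq K$ is needed: if $(x)_{\sim_H}=(y)_{\sim_H}$, then $x\cdot y\in H$ and $y\cdot x\in H$ by \eqref{eq9}, whence $x\cdot y\in K$ and $y\cdot x\in K$ since $H\subseteq K$, so $x\sim_K y$ and $(x)_{\sim_K}=(y)_{\sim_K}$. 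I expect this well-definedness check to be the only delicate point of the argument, as it is the sole place the containment $H\subseteq K$ is used.

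Next I would check that $f$ is a UP-homomorphism, using the description of the two quotient operations (both written $\ast$) from Theorem \ref{UP19} \ref{UP19_2}: one computes $f((x)_{\sim_H}\ast(y)_{\sim_H})=f((x\cdot y)_{\sim_H})=(x\cdot y)_{\sim_K}=(x)_{\sim_K}\ast(y)_{\sim_K}=f((x)_{\sim_H})\ast f((y)_{\sim_H})$. Surjectivity is immediate, since every $(x)_{\sim_K}$ is the image of $(x)_{\sim_H}$; hence $f$ is a UP-epimorphism and $\mathrm{Im}(f)=A/\sim_K$. By Theorem \ref{UP24} \ref{UP24_7} the kernel $\mathrm{Ker}(f)$ is automatically a UP-ideal of $A/\sim_H$, which in particular legitimizes the relation $\sim_{(K/\sim_H)}$ appearing in the statement.

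The remaining computation is to identify this kernel. I would show $\mathrm{Ker}(f)=K/\sim_H$ by observing that $f((x)_{\sim_H})=(0)_{\sim_K}$ if and only if $x\sim_K 0$; by (UP-3) the condition $x\cdot 0=0\in K$ holds automatically, while by (UP-2) the condition $0\cdot x\in K$ reduces to $x\in K$. Thus $(x)_{\sim_H}\in\mathrm{Ker}(f)$ exactly when $x\in K$, i.e. exactly when $(x)_{\sim_H}\in K/\sim_H$. Finally, feeding $f$ into Theorem \ref{ISO1} yields $(A/\sim_H)/\sim_{\mathrm{Ker}(f)}\cong\mathrm{Im}(f)$, and substituting $\mathrm{Ker}(f)=K/\sim_H$ and $\mathrm{Im}(f)=A/\sim_K$ gives the desired isomorphism $(A/\sim_H)/\sim_{(K/\sim_H)}\cong A/\sim_K$. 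Apart from the well-definedness of $f$, everything is a routine transport of the classical third-isomorphism argument through the UP-quotient machinery already established in Theorems \ref{UP19} and \ref{ISO1}.
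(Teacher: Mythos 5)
Your proposal is correct and follows essentially the same route as the paper: the same collapsing map $f\colon A/\sim_{H}\to A/\sim_{K}$, $(x)_{\sim_{H}}\mapsto(x)_{\sim_{K}}$, with well-definedness via $H\subseteq K$, the identification $\mathrm{Ker}(f)=K/\sim_{H}$ using (UP-2), and an appeal to Theorem~\ref{ISO1}. Your only additions --- invoking (UP-3) to note $x\cdot 0=0\in K$ holds automatically, and citing Theorem~\ref{UP24}~\ref{UP24_7} to confirm $K/\sim_{H}$ is a UP-ideal of $A/\sim_{H}$ so that the quotient in the statement is legitimate --- are harmless refinements of the same argument.
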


\begin{proof}
By Theorem \ref{UP19} \ref{UP19_2}, we obtain $(A/\sim_{K},\ast,(0)_{\sim_{K}})$ and $(A/\sim_{H},\ast^{\prime},(0)_{\sim_{H}})$ are UP-algebras.
Define
\begin{equation}\label{eqiso3}
f\colon A/\sim_{H}\to A/\sim_{K},(x)_{\sim_{H}}\mapsto(x)_{\sim_{K}}.
\end{equation}
For any $x,y\in A$, if $(x)_{\sim_{H}}=(y)_{\sim_{H}}$, then $x\cdot y,y\cdot x\in H$.
Since $H\subseteq K$, we obtain $x\cdot y,y\cdot x\in K$.
Thus $(x)_{\sim_{K}}=(y)_{\sim_{K}}$, so $f((x)_{\sim_{H}})=f((y)_{\sim_{H}})$.
Thus $f$ is a mapping.
Also, for any $x,y\in A$, we see that
\begin{equation*}
f((x)_{\sim_{H}}\ast^{\prime}(y)_{\sim_{H}})=f((x\cdot y)_{\sim_{H}})=(x\cdot y)_{\sim_{K}}=(x)_{\sim_{K}}\ast(y)_{\sim_{K}}=f((x)_{\sim_{H}})\ast f((y)_{\sim_{H}}).
\end{equation*}
Thus $f$ is a UP-homomorphism.
Clearly, $f$ is surjective.
Hence, $f$ is a UP-epimorphism.
We shall show that $\mathrm{Ker}(f)=K/\sim_{H}$.
In fact,
\begin{align*}
   \mathrm{Ker}(f)  & = \{(x)_{\sim_{H}}\in A/\sim_{H}\mid f((x)_{\sim_{H}})=(0)_{\sim_{K}}\} \\
        & = \{(x)_{\sim_{H}}\in A/\sim_{H}\mid (x)_{\sim_{K}}=(0)_{\sim_{K}}\}  \\
        & = \{(x)_{\sim_{H}}\in A/\sim_{H}\mid x=0\cdot x\in K\} \tag{By (UP-2)} \\
        & = K/\sim_{H}.
\end{align*}
Now, Theorem \ref{ISO1} gives $(A/\sim_{H})/\sim_{(K/\sim_{H})}\cong A/\sim_{K}$.
\end{proof}


\begin{theorem}\label{ISO5} (Fourth UP-isomorphism Theorem)
Let $(A,\cdot,0_{A})$ and $(B,\bullet,0_{B})$ be UP-algebras, and $f\colon A\to B$ a UP-epimorphism.
Denote $\mathcal{A}=\{X\mid X$ is a UP-ideal of $A$ containing $\mathrm{Ker}(f)\}$ and $\mathcal{B}=\{Y\mid Y$ is a UP-ideal of $B\}$.
Then the following statements hold:
\begin{enumerate}[label=\textrm{(\arabic*)}]
\item\label{ISO5_1} there is an inclusion preserving bijection
\begin{equation}\label{eqiso4}
\varphi\colon \mathcal{A}\to\mathcal{B},X\mapsto f(X),
\end{equation}
with inverse given by $Y\mapsto f^{-1}(Y)$, and
\item\label{ISO5_2} for any $X\in\mathcal{A}$,
\begin{equation*}
A/\sim_{X}\cong B/\sim_{f(X)}.
\end{equation*}
\end{enumerate}
\end{theorem}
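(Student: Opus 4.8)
The plan is to treat the two assertions in turn, proving the bijection \ref{ISO5_1} by direct verification and then deriving the isomorphism \ref{ISO5_2} from the First UP-isomorphism Theorem applied to a suitable composite. First I would confirm that both maps land in the correct sets. Given $X\in\mathcal{A}$, i.e. a UP-ideal $X$ of $A$ with $\mathrm{Ker}(f)\subseteq X$, Theorem \ref{UP24} \ref{UP24_6} together with $f(A)=B$ (from surjectivity) shows $f(X)$ is a UP-ideal of $B$, so $\varphi(X)\in\mathcal{B}$. Conversely, for a UP-ideal $Y$ of $B$, Theorem \ref{UP24} \ref{UP24_7} shows $f^{-1}(Y)$ is a UP-ideal of $A$, and since $0_{B}\in Y$ every $x\in\mathrm{Ker}(f)$ satisfies $f(x)=0_{B}\in Y$, giving $\mathrm{Ker}(f)\subseteq f^{-1}(Y)$ and hence $f^{-1}(Y)\in\mathcal{A}$. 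Inclusion preservation of $\varphi$ is immediate, as images preserve containment.

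Next I would show the two maps are mutually inverse. The composite $f(f^{-1}(Y))=Y$ is immediate from surjectivity of $f$. The hard part will be the reverse composite $f^{-1}(f(X))=X$, where only the inclusion $X\subseteq f^{-1}(f(X))$ is automatic and the opposite inclusion is exactly where the hypothesis $\mathrm{Ker}(f)\subseteq X$ must be used. To prove it, take $a\in f^{-1}(f(X))$, so $f(a)=f(x)$ for some $x\in X$; using Proposition \ref{UP2} \ref{UP2_1} I would compute $f(x\cdot a)=f(x)\bullet f(a)=f(a)\bullet f(a)=0_{B}$, whence $x\cdot a\in\mathrm{Ker}(f)\subseteq X$. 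Now $x\cdot a\in X$ and $x\in X$, so Theorem \ref{UP9} \ref{UP9_1} forces $a\in X$. This yields $f^{-1}(f(X))=X$ and completes \ref{ISO5_1}.

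For \ref{ISO5_2}, the idea is to produce one UP-epimorphism out of $A$ whose kernel is precisely $X$. Fixing $X\in\mathcal{A}$, I would set $g=\pi_{f(X)}\circ f\colon A\to B/\sim_{f(X)}$, the composite of $f$ with the natural projection of Theorem \ref{UP23}; as a composite of UP-epimorphisms it is again a UP-epimorphism with $\mathrm{Im}(g)=B/\sim_{f(X)}$. It then remains to identify $\mathrm{Ker}(g)$. An element $a$ lies in $\mathrm{Ker}(g)$ exactly when $(f(a))_{\sim_{f(X)}}=(0_{B})_{\sim_{f(X)}}$, that is, when $f(a)\sim_{f(X)}0_{B}$; unwinding \eqref{eq9} and noting that (UP-3) makes $f(a)\bullet 0_{B}=0_{B}\in f(X)$ automatic while (UP-2) reduces $0_{B}\bullet f(a)$ to $f(a)$, this condition is equivalent to $f(a)\in f(X)$, i.e. $a\in f^{-1}(f(X))=X$ by \ref{ISO5_1}. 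Thus $\mathrm{Ker}(g)=X$, and the First UP-isomorphism Theorem (Theorem \ref{ISO1}) delivers $A/\sim_{X}\cong\mathrm{Im}(g)=B/\sim_{f(X)}$, as desired.
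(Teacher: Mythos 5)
Your proposal is correct and follows essentially the same route as the paper's proof: the same key identity $f^{-1}(f(X))=X$, established via Proposition \ref{UP2} \ref{UP2_1}, the hypothesis $\mathrm{Ker}(f)\subseteq X$ and Theorem \ref{UP9} \ref{UP9_1}, and the same composite $\pi_{f(X)}\circ f$ whose kernel is computed to be $X$ before applying Theorem \ref{ISO1}. Your mutual-inverse framing of the bijection (in place of the paper's separate injectivity and surjectivity checks) and your slightly more explicit unwinding of $f(a)\sim_{f(X)}0_{B}$ via (UP-2) and (UP-3) are only cosmetic variations on the same argument.
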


\begin{proof}
\noindent\ref*{ISO5_1} For any $X\in\mathcal{A}$, it follows from Theorem \ref{UP24} \ref{UP24_6} that $f(X)$ is a unique UP-ideal of $B$ such that $\varphi(X)=f(X)$.
Thus $\varphi$ is a mapping.
For any $X_{1},X_{2}\in\mathcal{A}$, if $\varphi(X_{1})=\varphi(X_{2})$, then $f(X_{1})=f(X_{2})$.
Since $\mathrm{Ker}(f)\subseteq X_{1}$, we obtain $X_{1}=f^{-1}(f(X_{1}))$.
Indeed, let $x\in f^{-1}(f(X_{1}))$. Then $f(x)\in f(X_{1})$, so $f(x)=f(x_{1})$ for some $x_{1}\in X_{1}$.
Applying Proposition \ref{UP2} \ref{UP2_1}, we have $f(x_{1}\cdot x)=f(x_{1})\bullet f(x)=f(x_{1})\bullet f(x_{1})=0_{B}$.
Thus $x_{1}\cdot x\in\mathrm{Ker}(f)\subseteq X_{1}$, it follows from Theorem \ref{UP9} \ref{UP9_1} that $x\in X_{1}$.
So, $f^{-1}(f(X_{1}))\subseteq X_{1}$.
Clearly, $X_{1}\subseteq f^{-1}(f(X_{1}))$.
Similarly, since $\mathrm{Ker}(f)\subseteq X_{2}$, we obtain $X_{2}=f^{-1}(f(X_{2}))$.
Thus $X_{1}=f^{-1}(f(X_{1}))=f^{-1}(f(X_{2}))=X_{2}$.
Hence, $\varphi$ is injective.
Also, for any $Y\in\mathcal{B}$, we obtain $Y= f(f^{-1}(Y))$ because $f$ is surjective.
Applying Theorem \ref{UP24} \ref{UP24_7}, we have $f^{-1}(Y)$ is a UP-ideal of $A$ with $\mathrm{Ker}(f)\subseteq f^{-1}(Y)$.
Thus $f^{-1}(Y)\in\mathcal{A}$ is such that $\varphi(f^{-1}(Y))=f(f^{-1}(Y))=Y$.
Hence, $\varphi$ is surjective.
Therefore, $\varphi$ is bijective.
Finally, for any $Y\in\mathcal{B}$, we get that $Y=\varphi(f^{-1}(Y))$.
Hence, $\varphi^{-1}(Y)=f^{-1}(Y)$.

\noindent\ref*{ISO5_2} By Theorem \ref{UP24} \ref{UP24_6} and Theorem \ref{UP19} \ref{UP19_2}, we have $f(X)$ is a UP-ideal of $B$ and $(B/\sim_{f(X)},\ast,(0_{B})_{\sim_{f(X)}})$ is a UP-algebra.
It follows from Theorem \ref{UP23} that $\pi_{f(X)}\colon B\to B/\sim_{f(X)}$ is a UP-epimorphism.
Thus $\pi_{f(X)}\circ f\colon A\to B/\sim_{f(X)}$ is a UP-epimorphism.
We shall show that $\mathrm{Ker}(\pi_{f(X)}\circ f)=X$.
In fact,
\begin{align*}
   \mathrm{Ker}(\pi_{f(X)}\circ f)  & = \{a\in A\mid (\pi_{f(X)}\circ f)(a)=(0_{B})_{\sim_{f(X)}}\} \\
        & = \{a\in A\mid \pi_{f(X)}(f(a))=(0_{B})_{\sim_{f(X)}}\} \\
        & = \{a\in A\mid (f(a))_{\sim_{f(X)}}=(0_{B})_{\sim_{f(X)}}\} \\
        & = \{a\in A\mid f(a)=0_{B}\bullet f(a)\in f(X)\} \tag{By (UP-2)} \\
        & = f^{-1}(f(X)) \\
        & = X.
\end{align*}
Applying Theorem \ref{ISO1}, we have $A/\sim_{X}\cong B/\sim_{f(X)}$.
\end{proof}



\section*{Acknowledgment}
The author wish to express their sincere thanks to the referees for the valuable suggestions which lead to an improvement of this paper.


{\bf Received: \today}

\end{document}